\newtheorem{thm}{Theorem}[section]
\newtheorem*{thm*}{Theorem}
\newtheorem{lemma}[thm]{Lemma}
\newtheorem*{lemma*}{Lemma}
\newtheorem*{prop*}{Proposition}
\newtheorem{cor}[thm]{Corollary}
\newtheorem{fact}[thm]{Fact}
\newtheorem*{not*}{Notation}
\newtheorem{claim}[thm]{Claim}
\newtheorem*{claim*}{Claim}
\newtheorem*{fact*}{Fact}
\newtheorem{conj}[thm]{Conjecture}
\newcommand{\Q}{\mathbb{Q}}
\newcommand{\Z}{\mathbb{Z}}
\newcommand{\T}{\mathbb{T}}
\newcommand{\R}{\mathbb{R}}
\newcommand{\C}{\mathbb{C}}
\DeclareMathOperator{\Ex}{\mathbb{E}}
\newcommand{\GL}{\mathrm{GL}}
\newcommand{\eps}{\varepsilon}
\newcommand{\supp}{\mathrm{Supp}}
\newcommand{\poly}{\mathrm{poly}}
\renewcommand{\hat}[1]{\widehat{#1}}
\newcommand{\choosesq}[2]{\genfrac{[}{]}{0pt}{}{#1}{#2}}
\newcommand{\ip}[1]{\langle #1 \rangle}
\renewcommand{\Re}{\mathrm{Re}}
\newcommand{\one}[1]{\mathbf{1}\{#1\}}
\newcommand{\ignore}[1]{}
\renewcommand{\Pr}{\mathbb{P}}
\begin{document}

\title{Probabilistic existence of rigid combinatorial structures
    \\ \begin{large} (extended abstract version) \end{large}}

\author{Greg Kuperberg \thanks{University of California, Davis.  E-mail:
    \texttt{greg@math.ucdavis.edu}. Supported by NSF grant CCF-1013079.}
    \and Shachar Lovett \thanks{Institute for Advanced Study.  E-mail:
    \texttt{slovett@math.ias.edu}.  Supported by NSF grant DMS-0835373.}
    \and Ron Peled \thanks{Tel Aviv University, Israel.
    E-mail: \texttt{peledron@post.tau.ac.il}. Supported by an ISF
    grant and an IRG grant.}}

\maketitle

\begin{abstract} We show the existence of rigid combinatorial objects
which previously were not known to exist. Specifically, for a wide
range of the underlying parameters, we show the existence of non-trivial
orthogonal arrays, $t$-designs, and $t$-wise permutations.  In all cases,
the sizes of the objects are optimal up to polynomial overhead. The proof
of existence is probabilistic.  We show that a randomly chosen such object
has the required properties with positive yet tiny probability. The main
technical ingredient is a special local central limit theorem for suitable
lattice random walks with finitely many steps.
\end{abstract}

\section{Introduction}\label{sec:intro}

We introduce a new framework for establishing the existence of rigid
combinatorial structures, such as orthogonal arrays, $t$-designs and
$t$-wise permutations.  Let $B$ be a finite set and let $V$ be a vector
space of functions from $B$ to the rational numbers $\Q$.  We study when
there is a small subset $T\subset B$ satisfying
\begin{equation} \label{eq:prob_def}
    \frac{1}{|T|}\sum_{t\in T} f(t) = \frac{1}{|B|}\sum_{b\in B}
    f(b)\quad\text{for all $f$ in $V$.}
\end{equation}
In probabilistic terminology, equation \eqref{eq:prob_def} means that if
$t$ is a uniformly random element in $T$ and $b$ is a uniformly random
element in $B$ then
\begin{equation}\label{eq:prob_def_prob}
    \Ex[f(t)] = \Ex[f(b)]\quad\text{for all $f$ in $V$,}
\end{equation}
where $\Ex$ denotes expectation. Of course, \eqref{eq:prob_def} holds
trivially when $T=B$. Our goal is to find conditions on $B$ and $V$
that yield a small subset $T$ that satisfies \eqref{eq:prob_def}, where
in our situations, small will mean polynomial in the dimension of $V$.
(In many natural problems one might encounter a function space $V$ over $\R$
or $\C$ instead.  However, since \eqref{eq:prob_def} is a rational equation,
we can always reduce to the case of rational vector spaces.)

Our main theorem, Theorem~\ref{thm:main}, gives sufficient conditions for
the existence of a small subset $T$ satisfying \eqref{eq:prob_def}. We
apply the theorem to establish results in three interesting cases of
the general framework: orthogonal arrays, $t$-designs, and $t$-wise
permutations.  These are detailed in the next sections. Our methods solve
an open problem, whether there exist non-trivial $t$-wise permutations for
every $t$.  They strengthen Teirlinck's theorem \cite{Teirlinck87}, which
was the first theorem to show the existence of $t$-designs for every $t$.
And they improve existence results for orthogonal arrays, when the size
of the alphabet is divisible by many distinct primes. Moreover, in all
three cases considered, we show the existence of a structure whose size
is optimal up to polynomial overhead.

Our approach to the problem is via probabilistic arguments. In essence, we
prove that a random subset of $B$ satisfies equation \eqref{eq:prob_def}
with positive, albeit tiny, probability. Thus our method is one of the few
known methods for showing existence of rare objects. This class includes
such other methods as the Lov\'asz local lemma \cite{ErdosLovasz73} and
Spencer's ``six deviations suffice'' method \cite{Spencer1985}. However,
our method does not rely on these previous approaches.  Instead, our
technical ingredient is a special version of the (multi-dimensional) local
central limit theorem with only finitely many available steps. Since only
finitely many steps are available, and since we can only gain access to
more steps by increasing the dimension of the random walk, we cannot use
any ``off the shelf'' local central limit theorem, not even one enhanced by
a Berry-Esseen-type estimate of the rate of convergence. Instead, we prove
the local central limit theorem that we need directly using Fourier
analysis. Section~\ref{sec:proof_overview} gives an overview of our
approach.

We also mention that efficient randomized algorithm versions of the
Lov\'asz local lemma \cite{Moser2009, MoserTardos2010} and Spencer's
method \cite{Bansal2010} have recently been found.  Relative to
these new algorithms, the objects that they produce are no longer rare.
Our method is the only one that we know that shows the existence of rare
combinatorial structures, which are still rare relative to any known,
efficient, randomized algorithm.

\subsection{Orthogonal arrays}
\label{sec:OAs}

A subset $T \subset [q]^n$ is an \emph{orthogonal array of alphabet size
$q$, length $n$ and strength $t$} if it yields all strings of length $t$
with equal frequency if restricted to any $t$ coordinates.  In other words,
for any distinct indices $i_1,\ldots,i_t \in [n]$ and any (not necessarily
distinct) values $v_1,\ldots,v_t \in [q]$,
$$
\left|\{x \in T: x_{i_1}=v_1,\ldots,x_{i_t}=v_t\}\right| = q^{-t} |T|.
$$
Equivalently, choosing $x=(x_1,\ldots,x_n) \in T$ uniformly, the distribution
of $x \in [q]^n$ is $t$-wise independent. For an introduction to orthogonal
arrays see~\cite{orthogonalarrays}.

Orthogonal arrays fit into our general framework as follows. We take $B$
to be $[q]^n$ and $V$ to be the space spanned by all functions of the form
\begin{equation}\label{eq:f_I_v_def}
  f_{(I,v)}(x_1,\ldots, x_n) = \begin{cases}
    1& x_i=v_i\text{ for all $i\in I$}\\0&\text{Otherwise}
  \end{cases},
\end{equation}
with $I\subset[n]$ a subset of size $t$ and $v\in[q]^I$.  With this choice,
a subset $T\subset B$ satisfying \eqref{eq:prob_def} is precisely an
orthogonal array of alphabet size $q$, length $n$ and strength $t$.

It is well known that if $T \subset [q]^n$ is $t$-wise independent
then $|T| \ge \left(\frac{cqn}{t}\right)^{t/2}$ for some universal
constant $c>0$ (see, e.g.,~\cite{Rao73}). Matching constructions of
size $|T|\le q^{c t}\left(\frac{n}{t}\right)^{c_q t}$ are known,
however, as these rely on finite field properties the constant $c_q$
generally tends to infinity with the number of prime factors of $q$.
Our technique provides the first upper bound on the size of orthogonal
arrays in which the constant in the exponent is independent of $q$.

\begin{thm}[Existence of orthogonal arrays]\label{thm:OA_first_version}
For all integers $q\ge 2$, $n\ge 1$ and $1\le t\le n$ there exists an
orthogonal array $T$ of alphabet size $q$, length $n$ and strength $t$
satisfying $|T| \le (qn)^{ct}$ for some universal constant $c>0$.
\end{thm}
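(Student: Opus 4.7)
The plan is to apply the general existence result, Theorem~\ref{thm:main}, to the framework set up above: take $B=[q]^n$ and let $V$ be the span of the indicator functions $f_{(I,v)}$ from \eqref{eq:f_I_v_def}. Theorem~\ref{thm:main} should yield a subset $T \subset B$ satisfying \eqref{eq:prob_def} whose size is polynomial in $\dim V$, and since the number of indicators is $\binom{n}{t}q^{t} \le (qn)^{O(t)}$, so too is $\dim V$, giving the required bound $|T| \le (qn)^{ct}$ for some universal $c$.

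Before invoking the theorem I would replace the basis of indicators by a Fourier basis. Using the abelian group structure of $(\Z/q\Z)^n$, the space $V$ coincides with the span of the characters $\chi$ of $(\Z/q\Z)^n$ whose set of nontrivially-acting coordinates has size at most $t$, of which there are $\sum_{k=0}^{t}\binom{n}{k}(q-1)^k$. Characters are multiplicative across coordinates, so when $x\in[q]^n$ is uniform the vector of character values factors as a product of independent coordinate contributions. This product structure is what makes the local central limit theorem underlying Theorem~\ref{thm:main} tractable.

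Next I would verify the hypotheses of Theorem~\ref{thm:main}. These presumably consist of (i) an integrality condition on the target, handled by choosing $|T|$ to be a multiple of $q^{t}$ so that every target count $|T|/q^{t}$ is an integer; and (ii) a non-degeneracy condition ensuring that the $\dim V$-dimensional random vector of function values at a uniform $x\in B$ is suitably spread out in an appropriate lattice, so that sums of many i.i.d.\ copies can hit a prescribed point. For the character basis, non-degeneracy should follow from orthogonality of characters and factorization across coordinates: the characteristic function of the random vector is bounded away from $1$ outside a small neighborhood of the origin by an elementary one-dimensional estimate on $\Z/q\Z$.

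The main obstacle I anticipate is the quantitative non-degeneracy step when $q$ has many small prime factors. In that regime the characters take values in roots of unity of large order and the lattice generated by the vectors of character values as $x$ ranges over $B$ must be analyzed carefully, so as to confirm both that the target lies in it and that the Fourier bounds are uniform in $q$. A natural route is to decompose $q=\prod p_i^{a_i}$, apply the Chinese remainder theorem to split $(\Z/q\Z)^n$ as a product of $(\Z/p_i^{a_i}\Z)^n$, verify the hypotheses prime-power-by-prime-power, and combine them while ensuring that the exponent in $(qn)^{ct}$ remains a universal constant independent of the prime factorization of $q$.
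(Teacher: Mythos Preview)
Your high-level plan---apply Theorem~\ref{thm:main} to $B=[q]^n$ and the span $V$ of the indicators $f_{(I,v)}$---is right, and your size estimate for $\dim V$ is fine. But two concrete points diverge from what the paper actually does and from what Theorem~\ref{thm:main} actually requires.

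First, the basis. Theorem~\ref{thm:main} asks for a basis of $V$ consisting of \emph{integer-valued} functions $\phi_a:B\to\Z$. Characters of $(\Z/q\Z)^n$ take values in $q$-th roots of unity, so they do not fit the hypothesis, and passing to real and imaginary parts would drag you into exactly the $q$-dependent arithmetic you are worried about. The paper instead keeps indicator functions: it takes $A=\{(I,v):|I|\le t,\ v\in[q-1]^I\}$ and $\phi_{(I,v)}=f_{(I,v)}$. Restricting to values in $[q-1]$ (rather than $[q]$) is what makes these a basis rather than a spanning set; the missing indicators are recovered by a short inclusion--exclusion (Claim~\ref{cl:OA_A_spans_V}). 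With this basis everything is manifestly integer-valued and uniform in $q$, so the whole CRT detour is unnecessary.

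Second, the hypotheses. Your item (ii) describes a Fourier-analytic non-degeneracy estimate (``characteristic function bounded away from $1$ outside a neighborhood of the origin''). That is the \emph{mechanism of the proof} of Theorem~\ref{thm:main}, not one of its hypotheses. What you must actually verify is the \textbf{isolation} condition: for each $a\in A$, exhibit many disjointly-supported integer vectors $\gamma_i\in\Z^B$ with $\phi(\gamma_i)=m\cdot e_a$ and controlled $\ell_2$ norm. This is a combinatorial construction, not a spectral bound, and it is where the real work lies. The paper builds these vectors explicitly (Lemma~\ref{lem:OA_gamma_vecs}) via signed sums $\delta_{x,K}=\sum_{J\subseteq K}(-1)^{|K|-|J|}e_{x|_{J\cup K^c}}$ and a recursion over $|I|$, then gets disjoint supports by choosing many $x$'s pairwise at Hamming distance $>2t$. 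The remaining hypotheses are easy: divisibility with $c_0=q^t$, boundedness with $c_1=(n+1)^{t/2}$, and symmetry via the translation action $b\mapsto b+x\pmod q$. Plugging the resulting parameters into Theorem~\ref{thm:main} gives $|T|\le (qn)^{ct}$ with $c$ universal, independent of the prime factorization of $q$.
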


\subsection{Designs}
\label{sec:designs}

A (simple) \emph{$t$-$(v,k,\lambda)$ design} is a family of distinct
subsets of $[v]$, where each set is of size $k$, such that each $t$
elements belong to exactly $\lambda$ sets. In other words, denoting
by $\choosesq{v}{k}$ the family of all subsets of $[v]$ of size $k$,
a set $T \subset \choosesq{v}{k}$ is a $t$-design if for any
distinct elements $i_1,\ldots,i_t \in [v]$,
\begin{equation}\label{eq:design_def}
\left|\{s \in T: i_1,\ldots,i_t \in s\}\right|
    = \frac{\binom{k}{t}}{\binom{v}{t}} |T|=\lambda.
\end{equation}
For an introduction to combinatorial designs see~\cite{Hand07}.

Our general framework includes $t$-designs as follows. We take $B$ to
be $\choosesq{v}{k}$ and $V$ to be the space spanned by all functions of
the form
\begin{equation}\label{eq:f_a_def}
  f_{a}(b) = \begin{cases}
    1& a\subset b\\0&\text{Otherwise}
  \end{cases},
\end{equation}
with $a\in \choosesq{v}{t}$. With this choice, a subset $T\subset B$
satisfying \eqref{eq:prob_def} is precisely a simple $t-(v,k,\lambda)$
design.

Although $t$-designs have been investigated for many years, the basic
question of existence of a design for a given set of parameters $t,v,k$ and
$\lambda$ remains mostly unanswered unless $t$ is quite small. The case $t=2$
is known as a block design and much more is known about it than for larger
$t$. Explicit constructions of $t$-designs for $t \ge 3$ are known for
various specific constant settings of the parameters (e.g. $5$-$(12,6,1)$
design). The breakthrough result of Teirlinck~\cite{Teirlinck87} was the
first to establish the existence of non-trivial $t$-designs for $t\ge 7$. In
Teirlinck's construction, $k=t+1$ and $v$ satisfies congruences that grow
very quickly as a function of $t$. Other sporadic and infinite examples
have been found since then (see~\cite{Hand07} or \cite{Magliveras09} and
the references within), however, the set of parameters which they cover
is still very sparse. Moreover, it follows from \eqref{eq:design_def}
that any $t-(v,k,\lambda)$ design $T$ has size $|T|=\lambda \binom{v}{t}/
\binom{k}{t} \ge (v/k)^{t}$. Even when existence has been shown, the designs
obtained are often inefficient in the sense that their size is far from
this lower bound.  One of the main results of our work is to establish
the existence of efficient $t$-designs for a wide range of parameters.

\begin{thm}[Existence of $t$-designs]\label{thm:designs_first_version}
For all integers $v\ge 1$, $1\le t\le v$ and $t \le k \le v$ there
exists a $t$-$(v,k,\lambda)$ design whose size is at most $v^{c t}$
for some universal constant $c>0$.
\end{thm}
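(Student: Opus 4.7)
The plan is to cast the $t$-design problem as a special case of the general framework \eqref{eq:prob_def} and then invoke Theorem~\ref{thm:main}. I take $B = \choosesq{v}{k}$ and let $V$ be spanned by the functions $\{f_a : a \in \choosesq{v}{t}\}$ defined in \eqref{eq:f_a_def}. As already observed above, a subset $T \subset B$ satisfying \eqref{eq:prob_def} is exactly a simple $t$-$(v,k,\lambda)$ design. Since $\dim V \le \binom{v}{t} \le v^t$, any bound of the form $|T| \le (\dim V)^{O(1)}$ delivered by the main theorem translates directly into the sought $|T| \le v^{ct}$.

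The main task is therefore to verify the hypotheses of Theorem~\ref{thm:main} for this pair $(B,V)$. These hypotheses should comprise (i) a transitive symmetry group acting on $B$ and preserving $V$, (ii) a divisibility condition ensuring that $\lambda = |T|\binom{k}{t}/\binom{v}{t}$ can be made a positive integer, and (iii) a non-degeneracy condition of local central limit type. Condition (i) is immediate from the natural action of the symmetric group $S_v$ on $[v]$, which is transitive on both $B$ and on the basis $\{f_a\}$ of $V$. Condition (ii) is easily handled by choosing $|T|$ to be a sufficiently large multiple of $\binom{v}{t} \le v^t$, which forces $\lambda$ to be a positive integer; the extra factor $v^t$ is absorbed into the polynomial slack $v^{ct}$.

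The crux, and main obstacle, is condition (iii). I would analyze the step distribution of the associated random walk: a single uniform sample $b \in B$ contributes the integer vector $\bigl(f_a(b)\bigr)_{a \in \choosesq{v}{t}} \in \Z^{\dim V}$, whose coordinates record which $t$-subsets are contained in $b$. One must show that these step vectors generate a full-rank sublattice after accounting for the forced linear relation $\sum_a f_a(b) = \binom{k}{t}$, and then that the sum of $|T|$ i.i.d.\ such steps hits the target $\lambda \cdot \mathbf{1}$ with positive probability. The $S_v$-symmetry is decisive here: decomposing $V$ into its $S_v$-isotypic components, that is, the eigenspaces of the Johnson scheme, reduces the lattice analysis and the Fourier computation to a bounded number of irreducible pieces, each of which can be controlled by explicit character-sum estimates together with the special local central limit theorem developed in the paper. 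Once these ingredients are assembled, Theorem~\ref{thm:main} produces a subset $T \subset B$ of size at most $v^{ct}$ satisfying \eqref{eq:prob_def}, which is by construction a $t$-$(v,k,\lambda)$ design.
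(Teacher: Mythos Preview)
Your high-level plan is right, but your treatment of the hypotheses of Theorem~\ref{thm:main} has a real gap. The theorem does not ask for a vague ``non-degeneracy condition of local central limit type''; it asks for the concrete \textbf{isolation} condition: for each $a\in A$ one must exhibit many disjointly supported integer vectors $\gamma_i\in\Z^B$ with $\phi(\gamma_i)=m\cdot e_a$ and controlled $\ell_2$-norm. You never state or address this condition. The lattice/Fourier analysis you sketch (full-rank sublattice, Johnson-scheme isotypics, character sums) is the machinery \emph{inside} the proof of Theorem~\ref{thm:main}, not a hypothesis you are entitled to assume or re-derive here; invoking the main theorem means you must hand it the isolation vectors, not redo the local CLT.

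The paper's argument fills exactly this gap, and it is combinatorial rather than representation-theoretic. One first reduces to $k>2t$ (for $k\le 2t$ the complete design already has size at most $v^{2t}$). Then, for a fixed $a\in\choosesq{v}{t}$ and any $x\in\choosesq{v}{k}$ disjoint from $a$, one defines
\[
\gamma_{x,a}=\sum_{j=0}^{t}(-1)^{t-j}\,\frac{j!\,(k-j-1)!}{(k-t-1)!}\,\delta_{x,a,j},
\]
where $\delta_{x,a,j}$ is the indicator of those $b\subset a\cup x$ with $|a\cap b|=j$. A direct calculation with a binomial identity (Claim~\ref{cl:binomial_sum_designs}) shows $\phi(\gamma_{x,a})=\frac{k!}{(k-t)!}\,e_a$, so $m=k!/(k-t)!$. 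Disjoint supports are obtained by choosing many $x$'s with pairwise intersection at most $k-2t-1$ via a greedy packing, giving $r\ge |B|/(vk)^{2t}$ and $\|\gamma_{x,a}\|_2\le(2k)^{3t/2}$. With $c_0=\binom{v}{t}$, $c_1=v^{t/2}$, $c_2=(vk)^{2t}$, $c_3=(2k)^{3t/2}$, Theorem~\ref{thm:main} then yields $|T|\le v^{ct}$. Your Johnson-scheme idea might in principle produce such $\gamma$'s as well, but as written it does not, and it is unclear how decomposing $V$ into irreducibles would deliver the required disjointly supported integer preimages of $e_a$ with small norm.
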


\subsection{Permutations}
\label{sec:perms}

A family of permutations $T \subset S_n$ is called a \emph{$t$-wise
permutation} if its action on any $t$-tuple of elements is uniform.
In other words, for any distinct elements $i_1,\ldots,i_t \in [n]$ and
distinct elements $j_1,\ldots,j_t \in [n]$,
\begin{equation}\label{eq:perm_def}
    \left| \{\pi \in T: \pi(i_1)=j_1,\ldots,\pi(i_t)=j_t\} \right| =
    \frac{1}{n(n-1)\cdots(n-t+1)} |T|.
\end{equation}

Our general framework includes $t$-wise permutations as follows. We take
$B=S_n$ and $V$ to be the space spanned by all functions of the form
\begin{equation*}
  f_{(i, j)}(b) = \begin{cases}
    1& b(i_1)=j_1,\ldots,b(i_t)=j_t\\0&\text{Otherwise}
  \end{cases},
\end{equation*}
where $i=(i_1,\ldots,i_t)$ and $j=(j_1,\ldots,j_t)$ are $t$-tuples of
distinct elements in $[n]$. With this choice, a subset $T\subset B$
satisfying \eqref{eq:prob_def} is precisely a $t$-wise permutation.

Constructions of families of $t$-wise permutations are known only for
$t=1,2,3$: the group of cyclic shifts $x \mapsto x+a$ modulo $n$ is a
$1$-wise permutation; the group of invertible affine transformations $x
\mapsto ax+b$ over a finite field $\mathbb{F}$ yields a $2$-wise permutation;
and the group of M\"{o}bius transformations $x \mapsto (ax+b)/(cx+d)$
with $ad-bc=1$ over the projective line $\mathbb{F} \cup \{\infty\}$
yields a $3$-wise permutation. For $t \ge 4$ (and $n$ large enough),
however, no $t$-wise permutation is known, other then the full symmetric
group $S_n$ and the alternating group $A_n$ \cite{KaplanNaorReingold05,
AlonLovett11}. In fact, it is known (c.f., e.g., \cite{Cameron95}, Theorem
5.2) that for $n \ge 25$ and $t \ge 4$ there are no other \emph{subgroups}
of $S_n$ which form a $t$-wise permutation. (On other words, there are
no other $t$-transitive subgroups of $S_n$ for $t \ge 4$ and $n \ge 25$.)
One of our main results is to show existence of small $t$-wise permutations
for all $t$.

\begin{thm}[Existence of $t$-wise
permutations]\label{thm:perms_first_version} For all integers $n\ge 1$
and $1\le t\le n$ there exists a $t$-wise permutation $T\subset S_n$
satisfying $|T| \le \exp(t^{c}) n^{c t}$ for some universal constant $c>0$.
\end{thm}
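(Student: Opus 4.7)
The plan is to deduce the theorem by applying the general Theorem~\ref{thm:main} to the pair $(B,V)=(S_n, V_{t\text{-perm}})$ set up just above the statement, and to bound the two quantities that control its conclusion: the dimension $\dim V$, and the ``local-CLT constant'' for the induced lattice random walk on $\Z^{\dim V}$. The dimension is immediate: it is at most the number of pairs $(i,j)$ of $t$-tuples of distinct elements in $[n]$, so $\dim V \le (n(n-1)\cdots(n-t+1))^2 \le n^{2t}$, which will produce the $n^{ct}$ factor; the extra $\exp(t^c)$ factor will arise from representation-theoretic constants specific to $S_n$.

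The probabilistic core is to draw $N$ i.i.d.\ uniform permutations $\pi_1,\ldots,\pi_N\in S_n$ and show that, for $N$ as in the theorem, the random lattice walk
\begin{equation*}
S_N \;=\; \sum_{r=1}^N v_{\pi_r}\in\Z^{\dim V}, \qquad v_\pi \;=\; \bigl(f_{(i,j)}(\pi)\bigr)_{(i,j)}\in \{0,1\}^{\dim V},
\end{equation*}
hits its mean $s^* = (N/n!)\sum_{\pi\in S_n} v_\pi$ with positive (though exponentially small in $\dim V$) probability, since this event is exactly the event that $T=\{\pi_1,\ldots,\pi_N\}$ (as a multi-set) satisfies \eqref{eq:prob_def}. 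Two preliminary items must be dispatched. First, $s^*$ must lie in the affine sub-lattice supporting $S_N$; since each $v_\pi$ has exactly $n(n-1)\cdots(n-t+1)$ ones arranged symmetrically under the $S_n\times S_n$ action by left/right multiplication, this reduces to an arithmetic divisibility condition on $N$ that is easy to enforce. Second, one must identify the relevant sub-lattice $\Lambda\subset\Z^{\dim V}$ generated by the step differences $v_\pi-v_{\pi'}$, and verify that after quotienting by the above symmetries it has the expected rank, equal to the number of nontrivial irreducibles of $S_n$ appearing in the permutation representation on ordered $t$-tuples.

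The main analytic task is then a multidimensional local central limit theorem for $S_N$ at $s^*$, executed by Fourier inversion on the dual torus $\Theta$ of $\Lambda$:
\begin{equation*}
\Pr[S_N = s^*] \;=\; \frac{1}{\mathrm{vol}(\Theta)}\int_\Theta \hat P(\theta)^N\, e^{-2\pi i\langle\theta,s^*\rangle}\,d\theta, \qquad \hat P(\theta) \;=\; \Ex_\pi\bigl[e^{2\pi i\langle\theta,v_\pi\rangle}\bigr].
\end{equation*}
Near the origin a quadratic expansion of $\log\hat P$ yields a Gaussian contribution of order $N^{-\dim V/2}$, provided the covariance of $v_\pi$ is non-degenerate on $\Lambda\otimes\R$. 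Away from the origin one must prove a uniform bound $|\hat P(\theta)|\le 1-\eta$ with a quantitative $\eta>0$. This latter estimate is the \emph{principal obstacle}, and is essentially a character-sum bound on $S_n$: expanding $\hat P(\theta)$ in the decomposition of the $t$-tuple permutation representation into $S_n$-irreducibles indexed by partitions with at most $2t$ cells, and controlling the maximum of the resulting character sums away from the points fixed by the $S_n\times S_n$ symmetry. The loss incurred in these character-sum bounds---reflecting that irreducibles of $S_n$ of large dimension can produce rather modest spectral gaps---is exactly what produces the $\exp(t^c)$ factor in the final size bound. Once the local CLT is established with the quantitative constant above, plugging into Theorem~\ref{thm:main} outputs a $T\subset S_n$ of size at most $\exp(t^c)\,n^{ct}$ satisfying \eqref{eq:prob_def}, i.e.\ a $t$-wise permutation of the advertised size.
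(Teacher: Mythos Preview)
Your proposal conflates two layers of the paper's argument. Theorem~\ref{thm:main} already \emph{encapsulates} the entire local CLT machinery you sketch---the Fourier inversion, the Gaussian contribution near the origin, and the uniform bound $|\hat P(\theta)|\le 1-\eta$ away from a lattice---under four black-box hypotheses: Divisibility, Boundedness, Symmetry, and Isolation. To deduce Theorem~\ref{thm:perms_first_version} from Theorem~\ref{thm:main}, the task is not to redo any Fourier analysis; it is solely to exhibit a basis $(\phi_a)_{a\in A}$ of $V$ and parameters $m,c_0,c_1,c_2,c_3$ satisfying those four conditions with $\poly(|A|,m,c_0,c_1,c_2,c_3)\le \exp(t^c)n^{ct}$. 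Your ``two quantities that control its conclusion'' are actually six, and you do not identify any of $m,c_2,c_3$.

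The genuine gap is that you never engage with the Isolation condition, which the paper explicitly flags as ``the most difficult assumption to verify in our applications'' and the stated reason the permutation case is deferred. Concretely, for each $a\in A$ one must produce $r\ge |S_n|/c_2$ vectors $\gamma_i\in\Z^{S_n}$ with disjoint supports, each satisfying $\phi(\gamma_i)=m\cdot e_a$ and $\|\gamma_i\|_2\le c_3$. This is where the representation theory of $S_n$ actually enters in the paper's framework: one chooses the basis adapted to the decomposition of the permutation module on ordered $t$-tuples into irreducibles (indexed by partitions $\lambda\vdash n$ with $\lambda_1\ge n-t$), and constructs explicit short integer combinations of permutations isolating a single matrix coefficient in a single irreducible block. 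The $\exp(t^c)$ loss comes from the modulus $m$ and the norms $c_3$ in these constructions.

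Your alternative route---bounding $|\hat P(\theta)|$ directly by ``character-sum bounds'' on $S_n$---is not the paper's approach and is not viable as written: for continuous $\theta\in\T^A$, the exponent $\langle\theta,v_\pi\rangle$ is not a group homomorphism, so $\hat P(\theta)$ is not a character sum; and the set $\{\theta:|\hat P(\theta)|=1\}$ is a nontrivial lattice $L\subset\T^A$ that must be identified and excised before any uniform spectral gap can hold. In the paper's proof of Theorem~\ref{thm:main} this is handled precisely by the Isolation vectors (Lemma~\ref{lem:bound_fourier_far_M}) together with Symmetry and Boundedness (Lemma~\ref{lem:bound_fourier_near_M_far_L}), not by harmonic analysis on $S_n$.
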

It is clear from the definition~\eqref{eq:perm_def}
above that any $t$-wise permutation $T$ must satisfy $|T|\ge
n(n-1)\cdots(n-t+1)=n^{\Omega(t)}$. Thus, for fixed $t$, the $t$-wise
permutations we exhibit are of optimal size up to polynomial overhead. For
$t$ growing with $n$ these $t$-wise permutations may be larger, but still
no larger than $n^{t^{c}}$ for some universal constant $c>0$.

\subsection{Proof overview}
\label{sec:proof_overview}

The idea of our approach is as follows. Let $T$ be a random multiset of $B$
of some fixed size $N$ chosen by sampling $B$ uniformly and independently
$N$ times (with replacement).  Let $(\phi_a)_{a\in A}$ be a spanning
set of integer-valued functions for $V$ (where $A$ is some finite index
set). Observe that $T$ satisfies \eqref{eq:prob_def} if and only if
\begin{equation}\label{eq:phi_a_expectation}
  \sum_{t\in T} \phi_a(t) = \frac{N}{|B|}\sum_{b\in B}\phi_a(b)
  = \Ex\left[\sum_{t\in T} \phi_a(t)\right] \quad
  \text{for all $a$ in $A$.}
\end{equation}
Thus defining an integer-valued random variable
\begin{equation*}
  X_a:=\sum_{t\in T} \phi_a(t)
\end{equation*}
and $X:=(X_a)_{a\in A} \in \Z^A$ we see that existence of a subset of
size $N$ satisfying \eqref{eq:prob_def} will follow if we can show that
$\Pr[X=\Ex[X]]>0$. To this end we examine more closely the distribution
of $X$. Let $t_1,\ldots, t_N$ be the random elements chosen in forming
$T$. The spanning set $(\phi_a)_{a\in A}$ defines a mapping $\phi:B\to\Z^A$
by the trivial
\begin{equation*}
  \phi(b)_a:=\phi_a(b).
\end{equation*}
Observe that our choice of random model implies that the vectors
$(\phi(t_i))_{i\in [N]}$ are independent and identically distributed. Hence,
\begin{equation}\label{eq:X_as_sum}
    X = \sum_i \phi(t_i)
\end{equation}
may be viewed as the end position of an $N$-step random walk in the lattice
$\Z^{|A|}$. Thus we may hope that if $N$ is sufficiently large, then $X$ has
an approximately (multi-dimensional) Gaussian distribution by the central
limit theorem. If the relevant local central limit theorem holds as well,
then the probability $\Pr[X=x]$ also satisfies a Gaussian approximation. In
particular, since a (non-degenerate) Gaussian always has positive density
at its expectation, we could conclude that $\Pr[X=\Ex[X]]>0$ as desired.

The above description is the essence of our approach. The main obstacle
is, of course, pointed out in the last step. We must control the rate
of convergence of the local central limit theorem well enough that the
convergence error does not outweigh the probability density of the Gaussian
distribution at $\Ex[X]$. Recall that the order of magnitude of such a
density is typically $c^{-|A|}$ for some constant $c>1$, and recall that
$|A|$ is at least the dimension of $V$, which is the main parameter of
our problem.  So we indeed have very small probabilities.  For this reason,
and because we want convergence when $N$ is only polynomial in the dimension
of $V$, we were unable to use any standard local central limit theorem.
Instead, we develop an ad hoc version using direct Fourier analysis.

In our proof of the main theorem, we modify the above description in one
respect.  It is technically more convenient to work with a slightly different
probability model.  Instead of choosing $T$ as above, we set $p:=N/|B|$
and define $T$ by taking each element of $B$ into $T$ independently with
probability $p$.  This has the benefit of guaranteeing that $T$ is a
proper set instead of a multiset. However, it has also the disadvantage
that it does not guarantee that $|T|=N$. To remedy this, we assume that
the space $V$ contains the constant function $h(b) = 1$; or if not, we can
add it to $V$ at the minor cost of increasing the dimension of $V$ by 1.
With this assumption, we note that
\begin{equation*}
  \Ex\bigl[\sum_{t\in T} h(t)\bigr] = \Ex[|T|] = N.
\end{equation*}
Thus \eqref{eq:phi_a_expectation}, or equivalently $X=\Ex[X]$, also
implies that $|T|=N$ as required. Another disadvantage is that in this
new probability model, the vector $X$ is no longer a sum of identically
distributed variables. However, since the summands in \eqref{eq:X_as_sum}
are still independent, we can continue to use Fourier analysis methods in
our proof.

We cannot expect there to always be a small subset $T$ that satisfies
\eqref{eq:prob_def}. For instance, Alon and Vu~\cite{AlonVu97} found a
regular hypergraph with $n$ vertices and $\approx n^{n/2}$ edges, with
no regular sub-hypergraph. Here, the degree of a vertex is the number
of hyperedges incident to it and a regular hypergraph is one in which
the degrees of all vertices are equal. We may describe their example in
our language by letting $B$ be the set of edges of this hypergraph, $A$
be its vertex set, and define $\phi:B\to\{0,1\}^A$ by letting $\phi(b)$ be
the indicator function of the set of vertices incident to $b$. The result
of \cite{AlonVu97} implies that while the vector $\sum_{b\in B} \phi(b)$
is constant, this property is not shared by $\sum_{t\in T} \phi(t)$ for
any non-empty, proper subset $T\subset B$. Thus, we need to impose certain
conditions on $B$ and $V$, or equivalently on the map $\phi$.  We start
by requiring certain divisibility, boundedness and symmetry assumptions.
\begin{description}
\item[Divisibility:] $N$ is such that $\frac{N}{|B|} \sum_{b\in
B}\phi(b)$ is an integer vector. This property is clearly necessary for
\eqref{eq:phi_a_expectation} to hold and is typically a mild restriction
on $N$.

\item[Boundedness:] The entries of $\phi$ must be small. More precisely,
$\max_{a\in A, b\in B}|\phi(b)_a|$ is bounded by a polynomial in $\dim V$,
since our method requires $N$ to be at least some polynomial in this maximum.

\item[Symmetry:] A \emph{symmetry of $\phi$} is a pair consisting of a
permutation $\pi \in S_B$ and a linear transformation $\tau \in \GL(V)$
which satisfies $\phi(\pi(b))=\tau(\phi(b))$ for all $b \in B$. The
set of symmetries $(\pi,\tau)$ of $\phi$ is a subgroup of $S_B \times
\GL(V)$. We require that the projection to $B$ of the group of symmetries
is transitive. In other words, that for any $b_1,b_2 \in B$ there exists
a symmetry $(\pi,\tau)$ of $\phi$ satisfying $\pi(b_1)=b_2$.
\end{description}
It is not hard to verify that the third condition is
intrinsic to the structure of $V$ and does not depend on the
specific choice of spanning set $(\phi_a)$. In our applications it
follows easily from the overall symmetry of the setup.

However, we also have a fourth assumption which is more technical than the
others. First, we require that $(\phi_a)_{a\in A}$ forms a basis of $V$. This
implies that for any $a\in A$, we may express $e_a$, the unit vector with $1$
at its $a$'th coordinate, as a linear combination of the form $\sum_{b\in
B} c_b \phi(b)$. We call any such linear combination an \emph{isolating
combination for $a$}.  We assume that for each $a\in A$, there are many
isolating combinations supported on disjoint subsets of $B$. Moreover, we
require the coefficients of these combinations to have small norm and to
be rational with a small common denominator. This is the most difficult
assumption to verify in our applications. Section~\ref{sec:framework}
gives more details about all of these assumptions.

Our main theorem shows that these four conditions yield the existence of
a small solution of \eqref{eq:prob_def}.
\begin{thm*}[Main theorem - informal statement] Let $B$ be a finite set and
let $V$ be a vector space of functions from $B$ to $\Q$ which contains the
constant functions. If there exists a basis $(\phi_a)_{a \in A}$ of $V$,
consisting of integer-valued functions, which satisfies the boundedness,
symmetry and isolation conditions above. Then there is a small subset
$T \subset B$ such that
$$
\frac{1}{|T|} \sum_{t \in T} f(t) = \frac{1}{|T|} \sum_{b \in B}
f(b)
$$
for all $f$ in $V$.
\end{thm*}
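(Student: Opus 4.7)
The plan is to carry out the Fourier-analytic strategy outlined in Section~\ref{sec:proof_overview}, working with the Bernoulli model $T=\{b:\xi_b=1\}$ where the $\xi_b$ are i.i.d.\ Bernoulli($p$) with $p=N/|B|$ and $N$ chosen to be a sufficiently large polynomial in $\dim V$ so that the divisibility condition holds. The random vector $X=\sum_b \xi_b\,\phi(b)\in\Z^A$ is a sum of independent (non-identically distributed) lattice-valued steps, and the existence of $T$ reduces to showing that $\Pr[X=\mu]>0$, where $\mu:=\Ex[X]$. By Fourier inversion on the dual torus to the lattice generated by the steps,
\[
\Pr[X=\mu] \;=\; \frac{1}{(2\pi)^{|A|}}\int_{[-\pi,\pi]^{A}} \hat{X}(\theta)\, e^{-i\ip{\theta,\mu}} \, d\theta,
\qquad \hat{X}(\theta) \;=\; \prod_{b\in B} \bigl(1-p+p\, e^{i\ip{\theta,\phi(b)}}\bigr).
\]
The goal is to show this integral is strictly positive, and more specifically to produce a quantitative lower bound that competes with the Gaussian scale $\det(\Sigma)^{-1/2}$, which is typically of order $c^{-|A|}$ for the covariance $\Sigma$ of $X$.

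I would split the integration domain into a ``bulk'' ball $\|\theta\|\le r$ (with $r$ a polynomially small cutoff) and its complement. In the bulk, the boundedness hypothesis lets us Taylor expand $\log\hat{X}(\theta) = -\tfrac12\ip{\theta,\Sigma\theta} + O\bigl(\sum_b p\,|\ip{\theta,\phi(b)}|^3\bigr)$, where the cubic error is controlled because $|\phi(b)_a|$ and hence $|\ip{\theta,\phi(b)}|$ is polynomially bounded; choosing $N$ large enough forces the cubic term to be negligible against the quadratic. The symmetry condition is used here twofold: it guarantees that the marginal distribution of each coordinate $X_a$ depends only on the orbit of $a$, so $\Sigma$ inherits a group-invariant block structure that makes its spectrum tractable, and it ensures that the Gaussian density $\det(2\pi\Sigma)^{-1/2}\exp(-\tfrac12\ip{y,\Sigma^{-1}y})$ at $y=0$ has a clean $\exp(-O(|A|\log(\text{poly})))$ lower bound. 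Integrating the Gaussian approximation over the bulk yields the desired main term.

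For the tail $\|\theta\|>r$, I would use the pointwise bound
\[
|\hat{X}(\theta)| \;\le\; \exp\!\Bigl(-p(1-p)\sum_{b\in B}\bigl(1-\cos\ip{\theta,\phi(b)}\bigr)\Bigr),
\]
and here the isolation assumption is the engine. Given coordinate $a$, a family of disjoint isolating combinations $e_a=\sum_{b} c_b^{(j,a)}\phi(b)$ with small denominators and small $\ell^2$ norm implies that for each fixed $\theta$ and each $j$, $\theta_a = \sum_b c_b^{(j,a)}\ip{\theta,\phi(b)}$, so if $\theta_a$ is bounded away from $0$ (mod the appropriate sublattice) then at least one summand $\ip{\theta,\phi(b)}$ must be bounded away from $2\pi\Z$ by an amount inversely proportional to the coefficient norm. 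Disjointness of the supports across $j$ then produces many independent such $b$, each contributing a genuine $1-\cos$ gap of polynomial size, so that the product decays as $\exp(-p\cdot\Omega(|A|\cdot\text{poly}))$, beating the Gaussian density by a comfortable margin once $N$ is chosen appropriately.

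The main obstacle, and where nearly all the technical care must go, is balancing these parameters: the tail decay rate is only polynomially small per isolating combination, so one needs sufficiently many disjoint combinations per coordinate (this is why the isolation hypothesis is formulated with both a count and a norm bound), and one must ensure the bulk cutoff $r$ is large enough for the Gaussian tail outside it to also be negligible. I expect the proof will therefore reduce, after the Fourier setup, to a careful tracking of three polynomial exponents --- coming from bulk Gaussian density, cubic Taylor error, and tail decay per isolating combination --- and choosing $N$ polynomial in $\dim V$, the boundedness bound, the number of isolating combinations, and their coefficient norms, so that all three estimates simultaneously give room.
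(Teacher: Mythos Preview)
Your overall Fourier-analytic plan matches the paper's, but the two-region decomposition (bulk $\|\theta\|\le r$ versus tail) has a genuine gap that your isolation argument cannot close. The isolation vectors satisfy $\phi(\gamma)=m\cdot e_a$, so summing gives $\sum_b \gamma_b\ip{\theta,\phi(b)}=m\theta_a$; this forces some $\ip{\theta,\phi(b)}$ away from $\Z$ only when $m\theta_a$ is itself bounded away from $\Z$, i.e.\ only when $\theta$ is far from the lattice $M=(\tfrac1m\Z)^A\subset\T^A$. For $\theta$ near a nonzero point of $M$ your tail bound yields nothing, and indeed there is an entire sublattice $L\subseteq M$ on which $|\hat X(\theta)|=1$ exactly. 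Your parenthetical ``mod the appropriate sublattice'' gestures at this but does not address it. The paper accordingly uses a \emph{three}-region split --- near $L$, near $M\setminus L$, and far from $M$ --- and the middle region requires its own argument (Lemma~\ref{lem:bound_fourier_near_M_far_L}).

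That middle region is exactly where the symmetry hypothesis enters, and not in the way you propose. The paper does not use symmetry to control the spectrum of $\Sigma$ for the Gaussian main term; that term (Claim~\ref{cl:I1}) needs only the boundedness $\|\phi(b)\|_2\le c_1$ and a crude volume-of-ball lower bound, with no determinant estimate. Symmetry is used instead as follows: for $\alpha\in M\setminus L$ there exists some $b^*$ with $\ip{\phi(b^*),\alpha}\notin\Z$; one first finds a small set $K\subset B$ of size $O(|A|\log(c_1|A|))$ whose images $\phi(K)$ generate the step lattice, and then transitivity of the symmetry group translates $K$ over $B$ to show that a $1/|K|$ fraction of all $b\in B$ satisfy $\ip{\phi(b),\alpha}\notin\Z$, hence are at least $1/m$ from an integer. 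Without this mechanism (or a substitute), the region near $M\setminus L$ swamps the main term whenever $m>1$ --- as in the $t$-design application, where $m=k!/(k-t)!$. A related point you miss is that the main term is actually $|L|$ copies of the near-zero integral, not one.
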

We note that the size $N=|T|$ of the subset obtained must satisfy the
divisibility condition above. The existence theorems for orthogonal arrays,
$t$-designs and $t$-wise permutations follow by showing that for the choice
of $B$ and $V$ detailed in Sections~\ref{sec:OAs} through \ref{sec:perms}
there exists a choice of basis $\{\phi_a\}$ and small $N$ for which all
four conditions above hold.

\subsection{Related work}

In the probabilistic formulation \eqref{eq:prob_def_prob} of our problem we
seek a small subset $T \subset B$ such that the uniform distribution over
$T$ simulates the uniform distribution over $B$ with regards to certain
tests. There are two ways to relax the problem to make its solution easier,
and raise new questions regarding explicit solutions.

One relaxation is to allow a set $T$ with a non-uniform distribution
$\mu$. For many practical applications of $t$-designs and $t$-wise
permutations in statistics and computer science, but not quite every
application, this relaxation is as good as the uniform question.
The existence of a solution with small support is guaranteed by
Carath\'eodory's theorem, using the fact that the constraints
on $\mu$ are all linear equalities and inequalities. Moreover,
such a solution can be found efficiently, as was shown by Karp
and Papadimitriou~\cite{KarpPapadimitriou82} and in more general
settings by Koller and Megiddo~\cite{KollerMegiddo94}. Alon and
Lovett~\cite{AlonLovett11} give a strongly explicit analog of this in the case
of $t$-wise permutations and more generally in the case of group actions.

A different relaxation is to require the uniform distribution on $T$ to
only approximately satisfy equation \eqref{eq:prob_def_prob}.  Then it is
trivial that a sufficiently large random subset $T \subset B$ satisfies
the requirement with high probability, and the question is to find an
explicit solution. For instance, we can relax the problem of $t$-wise
permutations to \emph{almost} $t$-wise permutations.  For this variant
an optimal solution (up to polynomial factors) was achieved by Kaplan,
Naor and Reingold~\cite{KaplanNaorReingold05}, who gave a construction of
such an almost $t$-wise permutation of size $n^{O(t)}$.  Alternatively,
one can start with the constant size expanding set of $S_n$ given by
Kassabov~\cite{Kassabov07} and take a random walk on it of length $O(t
\log{n})$.

\subsection{Paper organization}

We give a precise description of the general framework and our main theorem
in Section~\ref{sec:framework}. We apply it to show the existence of
orthogonal arrays and $t$-designs in Section~\ref{sec:applications}. The
case of $t$-wise permutations requires a detour to the representation
theory of the symmetric group, and we defer it to the full version of
this paper. The proof of our main theorem is given in Section~\ref{sec:proof_main_theorem}. 
We summarize and give some open
problems in Section~\ref{sec:summary}.

\section{Main Theorem}
\label{sec:framework}

Let $B$ be a finite set and let $V$ be a vector space of functions from
$B$ to $\Q$. We ask for conditions for the existence of a small set $T
\subset B$ for which \eqref{eq:prob_def} holds.  Our theorem uses the
following notation.

For a basis $(\phi_a)_{a\in A}$ (where $A$ is some finite index set) of $V$
we define $\phi:B \to \Z^A$ by $\phi(b)_a = \phi_a(b)$. This definition
is extended linearly to $\phi:\Z^B\to\Z^A$ by setting $\phi(\gamma) =
\sum_{b \in B} \gamma_b \phi(b)$. In the same manner, a set $T\subset B$
is identified with its indicator vector so that $\phi(T)=\sum_{t \in T}
\phi(t)$. Finally, we recall from Section~\ref{sec:proof_overview} that a
symmetry of $\phi$ is a pair $\pi \in S_B$ and $\tau \in \GL(V)$ such that
$\phi(\pi(b)) = \tau(\phi(b))$ for all $b$ in $B$. We now state formally
our main theorem.

\begin{thm}[Main Theorem]\label{thm:main} Let $B$ be a finite set and $V$
be a vector space of functions from $B$ to $\Q$ which contains the constant
functions. Suppose that there exist integers $m,c_0\ge 1$, real numbers
$c_1,c_2,c_3 > 0$ and a basis $(\phi_a)_{a \in A}$ of $V$ consisting of
integer-valued functions such that:
\begin{description}
\item[Divisibility:] $\frac{c_0}{|B|} \phi(B)$ is an integer vector.
\item[Boundedness:] $\|\phi(b)\|_{2} \le c_1$ for all $b \in B$.
\item[Symmetry:] For each $b_1,b_2 \in B$ there exists a symmetry
$(\pi,\tau)$ of $\phi$ such that $\pi(b_1)=b_2$.
\item[Isolation:] For any $a \in A$ there exist vectors
$\gamma_1,\ldots,\gamma_r \in \Z^B$ for $r \ge |B|/c_2$ such that
\begin{itemize}
\item $\phi(\gamma_i) = m \cdot e_a$ for all $i \in [r]$.
\item The vectors $\gamma_1,\ldots,\gamma_r$ have disjoint supports,
where the support of a vector $\gamma \in \Z^B$ is the set of coordinates
on which it is nonzero.
\item $\|\gamma_i\|_2 \le c_3$ for all $i \in [r]$.
\end{itemize}
\end{description}
Then there exists a subset $T \subset B$ with $|T| \le
\poly(|A|,m,c_0,c_1,c_2,c_3)$ such that
$$
\frac{1}{|T|}\sum_{t\in T} f(t) = \frac{1}{|B|}\sum_{b\in B}
f(b)\quad\text{for all $f$ in $V$.}
$$
\end{thm}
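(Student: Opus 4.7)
The plan is to follow the Fourier-analytic strategy sketched in Section~\ref{sec:proof_overview}. Fix an integer $N$ polynomial in $(|A|, m, c_0, c_1, c_2, c_3)$ and divisible by $c_0$, set $p := N/|B|$, and let $T$ be obtained by including each $b \in B$ independently with probability $p$. Set
\begin{equation*}
X := \phi(T) = \sum_{b \in B} \mathbf{1}[b \in T]\,\phi(b) \in \Z^{A}.
\end{equation*}
Since the constant function $1$ lies in $V$ and $(\phi_a)$ is a basis, $1$ is a $\Q$-linear combination of the $\phi_a$; tracing this combination shows that $X = \Ex[X]$ forces $|T| = N$ (so $T$ has the right size), after which it forces the averaging identity \eqref{eq:prob_def} for every $f \in V$. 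The divisibility hypothesis ensures $\Ex[X] = p\,\phi(B) \in \Z^{A}$. Hence the entire theorem reduces to showing $\Pr[X = \Ex[X]] > 0$.

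I would establish this via Fourier inversion on the torus dual to $\Z^{A}$:
\begin{equation*}
\Pr[X = \Ex[X]] = \frac{1}{(2\pi)^{|A|}} \int_{[-\pi,\pi]^{|A|}} e^{-i \ip{\theta,\, \Ex[X]}} \prod_{b \in B} \Bigl((1-p) + p e^{i \ip{\theta,\, \phi(b)}}\Bigr)\, d\theta,
\end{equation*}
where the product form of the characteristic function follows from the independence of the Bernoulli indicators $\mathbf{1}[b \in T]$. The argument then splits the integration region into a bulk ball $\mathcal{B}$ of radius $\rho \asymp \poly(|A|, c_1)/\sqrt{N}$ around the origin and its complement, and estimates each piece separately.

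On the bulk I would Taylor-expand $\log\bigl((1-p) + p e^{ix}\bigr)$ to second order in $\theta$, with cubic error controlled using the boundedness hypothesis $\|\phi(b)\|_2 \le c_1$. This produces a Gaussian integrand $\exp(-\tfrac12 \theta^\top M \theta)$ with $M$ roughly $N$ times the covariance matrix of $\phi(b)$ under the uniform distribution on $B$. The symmetry hypothesis is what guarantees $M$ is well-conditioned: the transitive action on $B$ makes this covariance invariant under a rich linear representation, and together with the fact that $(\phi_a)$ is a basis (so the $\phi(b)$ linearly span $\R^{A}$) this forces the smallest eigenvalue of $M/N$ to be bounded below polynomially in the parameters. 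Integrating the Gaussian then gives a bulk lower bound of order $(\det M)^{-1/2} \ge \exp(-O(|A|\log N))$ for $\Pr[X=\Ex[X]]$.

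The main obstacle, as anticipated, is the tail $[-\pi,\pi]^{|A|}\setminus \mathcal{B}$, where the Gaussian approximation breaks down and one needs a uniform upper bound on the modulus of the characteristic function. This is where the isolation hypothesis enters. For any fixed $a \in A$, the $r \ge |B|/c_2$ disjointly supported integer combinations $\gamma_1,\dots,\gamma_r$ with $\phi(\gamma_i) = m e_a$ carve $B$ into $r$ independent \emph{gadgets} whose contributions to the product factor independently. On each gadget, the sign pattern of $\gamma_i$ together with the constraint $\phi(\gamma_i) = m e_a$ forces the associated factor to have modulus bounded away from $1$ by a quantity comparable to $p\,\|m\theta_a/(2\pi)\|_{\R/\Z}^{2}/\poly(c_3)$, so multiplying over $r$ gadgets yields decay of order $\exp\bigl(-\Omega(N/\poly(c_2,c_3))\,\|m\theta_a\|^{2}_{\R/2\pi\Z}\bigr)$ in each coordinate. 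Combining this decay across coordinates, and carefully handling the small set of $\theta$ for which $m\theta$ sits near the lattice $2\pi\Z^{A}$ (which the symmetry-enforced spanning property and divisibility ultimately exclude), beats the $(2\pi)^{|A|}$ tail volume once $N$ is chosen a sufficiently large polynomial in the parameters. Summing the bulk lower bound and the tail upper bound gives $\Pr[X = \Ex[X]] > 0$, completing the proof.
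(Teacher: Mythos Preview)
Your high-level strategy (Bernoulli model, Fourier inversion, bulk--tail split) matches the paper, but the decomposition you propose has a real gap. The set where the characteristic function has modulus $1$ is not just the origin: it is the full sublattice
\[
L=\{\theta\in\T^{A}:\ip{\phi(b),\theta}\in\Z\text{ for all }b\in B\},
\]
and the isolation hypothesis only pins $L$ inside $M=(\tfrac1m\Z)^{A}$, not at $\{0\}$ (indeed $L$ can be nontrivial whenever $m>1$, as in the $t$-design application). Your isolation-based tail estimate gives decay proportional to $\|m\theta_a\|_{\R/\Z}^2$, which vanishes on all of $M$, so near every point of $L\setminus\{0\}$ the integrand has modulus essentially $1$ and your absolute-value bound on the tail cannot beat the bulk. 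Your parenthetical that symmetry and divisibility ``ultimately exclude'' these points is not correct for $L$: nothing excludes them. The paper's fix is a three-way split --- near $L$, near $M\setminus L$, and far from $M$ --- together with the observation that, provided $c_0m\mid N$ (you only impose $c_0\mid N$), one has $\ip{\Ex[X],\alpha}\in\Z$ for every $\alpha\in L$, so by the $L$-periodicity of $\hat X$ each near-$L$ ball contributes \emph{exactly the same positive amount} as the ball at $0$, yielding a factor $|L|$ rather than a cancellation.

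A second, related issue is that you have the role of symmetry in the wrong place. The bulk lower bound does not need a spectral lower bound on the covariance at all: the paper simply uses boundedness to get $\theta^{T}R\theta\le |B|c_1^{2}\|\theta\|_2^{2}$ and integrates the resulting positive integrand over a tiny ball. Where symmetry is genuinely needed is precisely the region you glossed over, namely near $M\setminus L$: there the paper uses the transitive action to propagate a single $b^*$ with $\ip{\phi(b^*),\alpha}\notin\Z$ to a constant fraction of $B$ (via a small lattice-generating set and its translates), which is what drives $|\hat X|$ down exponentially in that region. Your sketch does not contain this mechanism.
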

We prove Theorem~\ref{thm:main} in Section~\ref{sec:proof_main_theorem}.
A careful examination of the proof shows that we can choose $|T|=N$ for
any $N \ge 1$ which satisfies the following constraints:
\begin{itemize}
\item $c_0 m$ divides $N$;
\item $N \ge \Omega(1) \cdot \max(m^3, |A|^2 m^2 \log^2(|A| m c_0 c_1 c_2
c_3), |A|^6 c_1^6 c_2^3 c_3^6 \log^3(|A| m c_0 c_1 c_2 c_3))$;
\item $N \le O(\sqrt{|B|})$.
\end{itemize}
Of course, if the parameters are so large so that the second and third
conditions contradict each other, then our theorem remains trivially true
by taking $T=B$.

\section{Applications}
\label{sec:applications}

In this section we apply our main theorem, Theorem~\ref{thm:main},
to prove the existence results for orthogonal arrays
and $t$-designs, Theorems~\ref{thm:OA_first_version} and
\ref{thm:designs_first_version}. The existence result for $t$-wise
permutations, Theorem~\ref{thm:perms_first_version}, is more complicated
because it requires a discussion of the representation theory of the
symmetric group.  We defer it to the full version of this paper.

\subsection{Orthogonal arrays}
\label{sec:orthogonal_arrays}

We use the choice of $B$ and $V$ described in Section~\ref{sec:OAs} and
recall the definition~\eqref{eq:f_I_v_def} of the functions $f_{(I,v)}$ of
that section. We note that for every subset $I$ we have $\sum_{v\in[q]^I}
f_{(I,v)}\equiv 1$. Thus $V$ contains the constant functions as
Theorem~\ref{thm:main} requires. We start by choosing a convenient
basis for $V$ of integer-valued functions.  Recall that the alphabet is
$[q]=\{1,\ldots,q\}$ and let $[q-1]=\{1,\ldots,q-1\}$ be all symbols other
than $q$. Extend the definition~\eqref{eq:f_I_v_def} of $f_{(I,v)}$ to
apply to all subsets $I$ with $|I| \le t$ and $v \in [q]^I$. Here, we mean
that $f_{(\emptyset,\emptyset)}$ is the constant function $1$. Finally, let
$$
A:=\{(I,v): |I| \le t, v \in [q-1]^{|I|}\}
$$
and for $a=(I,v) \in A$ set $\phi_a := f_{(I,v)}$.

\begin{claim} \label{cl:OA_A_spans_V} The span of the functions
$\{\phi_a\}_{a \in A}$ is $V$.
\end{claim}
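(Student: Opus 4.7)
The plan is to verify the two inclusions $V \subseteq \Span\{\phi_a\}_{a\in A}$ and $\Span\{\phi_a\}_{a\in A} \subseteq V$ separately. Both reduce to routine manipulations of indicator functions; the only substantive ingredient is a single inclusion--exclusion identity for the ``missing'' symbol $q$.

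For the containment $\Span\{\phi_a\}_{a\in A} \subseteq V$, I need to check that each $\phi_{(I,v)}$ with $|I| \le t$ lies in $V$. When $|I| = t$ this is immediate from the definition of $V$. When $|I| < t$, I would pick any $t$-set $I' \supseteq I$ and use the identity
$$
f_{(I,v)}(x) \;=\; \sum_{u \in [q]^{I'\setminus I}} f_{(I', (v,u))}(x),
$$
which holds because, for fixed $x$, the condition $x_i = v_i$ for all $i\in I$ is equivalent to exactly one choice of the values $(x_i)_{i \in I'\setminus I}$ appearing among the summands. The right-hand side is a sum of generators of $V$.

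For the reverse containment $V \subseteq \Span\{\phi_a\}_{a\in A}$, it suffices to express each generator $f_{(I,v)}$ (with $|I|=t$ and $v\in[q]^I$) as a $\Z$-linear combination of the $\phi_a$'s. The key observation is that for any coordinate $i$,
$$
\one{x_i = q} \;=\; 1 \;-\; \sum_{u \in [q-1]} \one{x_i = u}.
$$
Given $(I,v)$, I would let $I_q := \{i \in I : v_i = q\}$, write
$$
f_{(I,v)}(x) \;=\; \prod_{i \in I \setminus I_q} \one{x_i = v_i} \cdot \prod_{i \in I_q} \Bigl(1 - \sum_{u \in [q-1]} \one{x_i = u}\Bigr),
$$
and expand the second product. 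Each term of the expansion corresponds to a subset $S \subseteq I_q$ together with a choice $w \in [q-1]^S$, and equals $(-1)^{|S|} f_{(J,v'')}$ where $J = (I\setminus I_q) \cup S$ and $v''$ agrees with $v$ on $I\setminus I_q$ and with $w$ on $S$. Since $|J| \le |I| = t$ and $v'' \in [q-1]^J$, every such term is a $\phi_a$ with $a \in A$, yielding the required expression.

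There is no serious obstacle here; the argument is a direct combinatorial identity, and I would expect the remaining delicate point (linear independence of the $\phi_a$'s, which is needed later to justify the word ``basis'') to be handled either by a dimension count ($|A| = \sum_{s=0}^{t} \binom{n}{s}(q-1)^s$ matches $\dim V$) or by observing that the expansion above is invertible, so that the $\phi_a$'s and the extended generators $f_{(I,v)}$ with $|I|\le t$, $v\in[q]^I$ are related by a unitriangular change of basis.
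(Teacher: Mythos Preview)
Your proof is correct and follows essentially the same approach as the paper. Both directions rest on the same identity $\one{x_i=q}=1-\sum_{u\in[q-1]}\one{x_i=u}$; the only cosmetic difference is that the paper peels off the $q$-coordinates one at a time by induction on the number of entries equal to $q$, whereas you expand the product over all $q$-coordinates at once --- your expansion is precisely the unrolled form of the paper's induction. Your explicit treatment of the inclusion $\phi_a\in V$ (summing over the missing coordinates to reach a $t$-set) spells out what the paper dismisses as ``clearly,'' and your remark on linear independence is handled in the paper exactly as you anticipate, via the isolation vectors rather than a dimension count.
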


\begin{proof}
Clearly $\phi_a \in V$ for all $a \in A$. To see that $\{\phi_a\}_{a \in A}$
spans $V$, we will show that any $f_{(I,v)}$ with $|I| \le t$ and $v \in
[q]^I$ is spanned by $\{\phi_a\}_{a \in A}$. We do this by induction on the
number of elements in $v$ which are equal to $q$. First, if $v \in [q-1]^I$
then $f_{(I,v)}=\phi_{(I,v)}$.  Otherwise, let $I=\{i_1,\ldots,i_r\}$
with $r \le t$, $v \in [q]^I$ and assume WLOG that $v_{i_1}=q$. Then
$$
f_{(I,v)} = f_{(\{i_2,\ldots,i_r\},(v_{i_2},\ldots,v_{i_r}))} -
\sum_{m=1}^{q-1} f_{(I,(m,v_{i_2},\ldots,v_{i_r}))}
$$
and by induction, the right hand side belongs to the linear span of
$\{\phi_a\}_{a \in A}$.
\end{proof}

Recall that $\phi:B \to \Z^A$ is defined as $\phi(b)_a = \phi_a(b)$.
We now choose integers $m,c_0\ge 1$ and real numbers $c_1,c_2,c_3 >0$ such
that the conditions of divisibility, boundedness, symmetry and isolation
required by Theorem~\ref{thm:main} are satisfied.  First, let $a=(I,v)
\in A$. Note that $\frac{1}{|B|}\phi(B)_a=q^{-|I|}$. Thus we set $c_0=q^t$
so that $\frac{c_0}{|B|} \phi(B)$ is an integer vector. Second, we clearly
have for any $b \in B$ that $\|\phi(b)\|_2^2=\sum_{i=0}^t \binom{n}{i}
\le (n+1)^t$. Hence we set $c_1=(n+1)^{t/2}$.

Third, to witness the symmetry condition, fix $x \in [q]^n$ and consider the
permutation $\pi \in S_B$ given by $\pi(b)=b+x \pmod{q}$. We need to show
that there exists a linear map $\tau$ acting on $V$ such that $\phi(\pi(b))
= \tau (\phi(b))$ for all $b\in B$. This holds since for $a=(I,v) \in A$
we have
$$
\phi(\pi(b))_a = f_{I,v}(b+x \textrm{ (mod }{q}))=f_{I,v-x\textrm{
    (mod }q)}(b)
$$
and $f_{I,v-x\textrm{ (mod }q)} \in V$ is in the linear span of
$\{\phi_a\}_{a \in A}$ by Claim~\ref{cl:OA_A_spans_V}.

The fourth condition we need to verify is the existence of many disjoint
isolation vectors for each $a \in A$. Note that this condition also implies
that $\{\phi_a\}_{a \in A}$ is a basis for $V$. This is established in
the following lemma.

\begin{lemma}\label{lem:OA_gamma_vecs}
Let $a \in A$. There exist disjoint vectors $\gamma_1,\ldots,\gamma_r \in
\Z^B$ with $r \ge |B|/(q^t n^{2t})$ and $\|\gamma_i\|_2 \le 2^{3t/2} n^t$
such that $\phi(\gamma_i)=e_a$.
\end{lemma}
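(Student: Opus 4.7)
The plan is to build a single explicit isolating vector $\gamma_0 \in \Z^B$ satisfying $\phi(\gamma_0) = e_a$ whose support is a small combinatorial subcube of $B = [q]^n$, and then obtain the full family $\gamma_1, \ldots, \gamma_r$ as translates of $\gamma_0$ under the coordinate-additive symmetries $b \mapsto b + y \pmod{q}$.

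For the base vector, fix $a = (I,v)$. Let $\gamma_0^{(i)} \in \Z^{[q]}$ denote the local vectors $\gamma_0^{(i)} := \delta_{v_i} - \delta_q$ for $i \in I$ and $\gamma_0^{(i)} := \delta_q$ for $i \notin I$ (writing $\delta_j$ for the indicator of $j \in [q]$), and take $\gamma_0 := \gamma_0^{(1)} \otimes \cdots \otimes \gamma_0^{(n)}$ under the identification $\Z^B \cong (\Z^{[q]})^{\otimes n}$. Thus $\gamma_0$ is supported on the cube
$$
C_0 := \{ b \in [q]^n : b_i \in \{v_i, q\} \text{ for } i \in I,\ b_i = q \text{ for } i \notin I \}
$$
with sign $(-1)^{|\{i \in I : b_i = q\}|}$. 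To verify $\phi(\gamma_0) = e_a$, I would compute $\phi(\gamma_0)_{(J,w)} = \sum_{b : b|_J = w} \gamma_0(b)$ using the tensor factorization. Each coordinate $i \in I \setminus J$ contributes a factor $\sum_x(\one{x = v_i} - \one{x = q}) = 0$, forcing $I \subset J$. Once $I \subset J$, each coordinate $i \in J \setminus I$ contributes $\one{w_i = q}$, which vanishes since $w \in [q-1]^J$, forcing $J = I$. The case $J = I$ then yields $\prod_{i \in I} \one{w_i = v_i} = \one{w = v}$ as required.

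For the family, I would translate $\gamma_0$ by all $y \in [q]^n$ with $y|_I = 0$, setting $\gamma_y(b) := \gamma_0(b - y \bmod q)$. The constraint $y|_I = 0$ is precisely what preserves $\phi(\gamma_y) = e_{(I,v)}$, since the translation symmetry sends $\phi_{(I,v)}$ to $\phi_{(I, v + y|_I)}$, which coincides with $\phi_{(I,v)}$ iff $y|_I \equiv 0 \pmod q$. Two such translates have disjoint supports because every $b \in C_0 + y$ forces $b_i \equiv q + y_i \pmod q$ for each $i \notin I$, so $y|_{[n]\setminus I}$ is uniquely recoverable from $b$. This yields $r = q^{n - |I|} \ge q^{n-t} \ge |B|/(q^t n^{2t})$ disjoint isolating vectors, each with $\|\gamma_y\|_2 = 2^{|I|/2} \le 2^{3t/2} n^t$, satisfying all the stated bounds.

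The main conceptual point, and the only delicate step, is the choice to use $q$ in a double role: as the ``special'' symbol on all coordinates outside $I$, and as one of the two allowed values on coordinates inside $I$. This alignment is exactly what lets the restriction $w \in [q-1]^J$ automatically kill every unwanted contribution at $J \supsetneq I$, and simultaneously ensures that the $q^{n-|I|}$ translates along $[n]\setminus I$ tile a disjoint family. Everything after this choice is routine tensor bookkeeping.
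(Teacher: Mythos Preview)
Your construction of the base vector $\gamma_0$ is correct and elegant: the tensor factorization cleanly verifies $\phi(\gamma_0)=e_a$, and the observation that the excluded symbol $q$ kills all contributions at $J\supsetneq I$ is exactly the right idea.

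The gap is in the translation step. You argue that $\phi(\gamma_y)=e_a$ whenever $y|_I=0$ because the translation fixes the \emph{function} $\phi_{(I,v)}$. But that is a statement about the $a$-th \emph{row} of the matrix $\tau$ (the expansion of $\phi_a\circ\pi$ in the basis), whereas $\phi(\gamma_y)=\tau(\phi(\gamma_0))=\tau(e_a)$ is the $a$-th \emph{column}. These need not agree. Concretely, rerun your own tensor computation for $\gamma_y$: for $i\in J\setminus I$ the local factor is now $\one{w_i=q+y_i\pmod q}$, which is no longer forced to vanish once $y_i\ne 0$, since $q+y_i$ can land in $[q-1]$. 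For example, with $q=2$, $n=3$, $t=2$, $a=(\{1\},(1))$ and $y=(0,1,0)$ one gets $\gamma_y=e_{(1,1,2)}-e_{(2,1,2)}$ and $\phi(\gamma_y)_{(\{1,2\},(1,1))}=1\neq 0$. So among all your translates, only $y=0$ actually isolates $a$.

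The paper handles this by varying the base point directly rather than by translation. For each $x\in[q]^n$ with $x|_I=v$ it builds a vector $\gamma_{x,I}$ supported in the Hamming ball of radius $t$ around $x$; your $\gamma_0$ is the special case $x_i=q$ for $i\notin I$, where no correction is needed. For general $x$ the analogue of your computation leaves unwanted $1$'s at coordinates $(K,x|_K)$ with $K\supsetneq I$, and the paper cancels them by a short downward recursion, subtracting $\gamma_{x,K}$ for each such $K$. Disjoint supports then come from choosing the $x$'s at pairwise Hamming distance exceeding $2t$, which a greedy packing gives $q^{n-t}/n^{2t}$ times; this is where the $n^{2t}$ in the bound on $r$ and the extra $n^t$ in the norm bound arise.
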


We prove Lemma~\ref{lem:OA_gamma_vecs} in two steps. First we fix some notations.  Let $K \subset [n]$ be of size $|K| \le
t$, and let $K^c=[n] \setminus K$. For $x \in [q]^n$ let $x|_K \in [q]^{K}$
be the restriction of $x$ to the coordinates of $K$.  Abusing notation, we
also think of $x|_K \in [q]^n$ by setting coordinates outside $K$ to zero.
Note that in this notation, $f_{I,v}(x) = \one{x|_I=v}$. We define the
vector $\delta_{x,K} \in \Z^B$ as
$$
\delta_{x,K} := \sum_{J \subseteq K} (-1)^{|K|-|J|} e_{x|_{J \cup K^c}},
$$
where we recall that for $b \in B$, $e_b \in \{0,1\}^{B}$ is
the corresponding unit vector. Note that if $K=\emptyset$ then
$\delta_{x,\emptyset}=e_x$.

\begin{claim}\label{cl:OA_delta_vecs}
Let $a=(I,v) \in A$. Then
$$
\phi(\delta_{x,K})_{a} = \Bigg\{
\begin{array}{ll}
0&\textrm{if } K \not \subseteq I\\
0&\textrm{if } a|_K \ne x|_K\\
1&\textrm{if } a|_K=x|_K\\
\end{array}
$$
\end{claim}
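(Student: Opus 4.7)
The plan is to prove the claim by a direct inclusion-exclusion computation, treating the three cases separately. Expanding $\phi$ linearly,
\begin{equation*}
\phi(\delta_{x,K})_a = \sum_{J \subseteq K} (-1)^{|K|-|J|} f_{I,v}\bigl(x|_{J \cup K^c}\bigr),
\end{equation*}
so the argument reduces to evaluating $f_{I,v}(y) = \one{y|_I = v}$ at $y = x|_{J \cup K^c}$ and summing with signs. The crucial feature of the abuse-of-notation convention is that positions $i \in K \setminus J$ carry the placeholder value $0$, which lies outside the alphabet $\{1,\ldots,q-1\}$ in which components of $v$ live; hence any $i \in I \cap (K \setminus J)$ kills the indicator for that $J$.

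For the case $K \not\subseteq I$, I would pick any index $i_0 \in K \setminus I$. Since $i_0 \not\in I$ the indicator $f_{I,v}(x|_{J \cup K^c})$ is insensitive to whether $i_0 \in J$, so pairing $J$ with $J \triangle \{i_0\}$ defines a sign-reversing involution on the index set of the sum, and the total collapses to $0$.

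For the case $K \subseteq I$ with $a|_K \ne x|_K$, I would fix $i_0 \in K$ (hence $i_0 \in I$) with $v_{i_0} \ne x_{i_0}$. The value at coordinate $i_0$ of $x|_{J \cup K^c}$ is either $x_{i_0}$ (when $i_0 \in J$) or the placeholder $0$ (when $i_0 \not\in J$); in both subcases it differs from $v_{i_0}$, so $f_{I,v}$ vanishes on every term and the sum is $0$.

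For the remaining case $K \subseteq I$ with $a|_K = x|_K$, the same placeholder argument applied to any $i \in I \cap (K \setminus J)$ forces a nonzero indicator to require $K \subseteq J$, which together with $J \subseteq K$ pins $J = K$. The single surviving term then contributes $(-1)^{0} f_{I,v}(x) = 1$. The main bookkeeping obstacle I anticipate is reconciling the ``local'' condition $a|_K = x|_K$ with the evaluation $f_{I,v}(x)$ needed in this last case; I would handle it by unpacking the definition of $e_{x|_{J\cup K^c}}$ carefully, using that $K \subseteq I$ so the only positions $y$ inspects are those where either $y_i = v_i$ by the hypothesis $a|_K = x|_K$ or where $y_i$ already agrees with $x_i$ on $K^c$.
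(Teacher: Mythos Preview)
Your approach is exactly the paper's: expand $\phi(\delta_{x,K})_a$ as an alternating sum of indicators, use a sign-reversing involution when $K\not\subseteq I$, and use the placeholder value (which lies outside $[q-1]$) to kill terms otherwise. Your treatment of the first two cases is correct and matches the paper verbatim.

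The obstacle you anticipated in the third case is real, and your proposed fix does not close it. When $J=K$ the surviving summand is $f_{I,v}(x)=\one{x|_I=v}$, and your argument only shows that on $K^c\cap I$ the vector $y$ agrees with $x$ --- a tautology, not the needed equality $x_i=v_i$. In fact the claim as literally written is false: with $q=2$, $n=3$, $t=2$, $K=\{1\}$, $I=\{1,2\}$, $v=(1,1)$, $x=(1,2,1)$ one has $K\subseteq I$ and $v|_K=x|_K=(1)$, yet both summands of $\delta_{x,K}$ have second coordinate $2\ne 1$, so $\phi(\delta_{x,K})_a=0$, not $1$. The paper's own proof actually establishes the correct statement, namely that when $K\subseteq I$ the value is $\one{x|_I=v}$ (``only if $J=K$ and $x|_I=v$''), and it is this stronger version that the inductive construction of $\gamma_{x,I}$ in Lemma~\ref{lem:OA_gamma_vecs} genuinely uses. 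So your computation is right up to and including the identification of the surviving term; the discrepancy lies in the claim's third line, which should read $\one{x|_I=v}$ rather than $1$.
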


\begin{proof}
We compute the value of $\phi(\delta_{x,K})$ in coordinate $a=(I,v) \in
A$. We have
$$
\phi(\delta_{x,K})_{a} = \sum_{J \subseteq K} (-1)^{|K|-|J|} \one{(x|_{J
    \cup K^c})|_{I}=v}.
$$
Suppose first that $K \not \subseteq I$. Then there exists $j \in K
\setminus I$. Flipping the $j$-th element in $J$ doesn't change the
expression $\one{(x|_{J \cup K^c})|_I=v}$ and hence the alternating sign
sum cancels. We thus assume from now on that $K \subseteq I$. We thus have
\begin{align*}
\one{(x|_{J \cup K^c})|_I=v} = \one{x|_J=v|_K \textrm{ and } x|_{K^c
    \cap I}=v|_{K^c}}.
\end{align*}
This expression evaluates to $1$ only if $J=K$ and $x|_I=v$.
\end{proof}

We next prove Lemma~\ref{lem:OA_gamma_vecs}, showing that we can build
many disjoint isolation vectors for any $a \in A$. The proof uses the
vectors $\delta_{x,K}$ we just analyzed.

\begin{proof}[Proof of Lemma~\ref{lem:OA_gamma_vecs}]
Fix $a=(I,v)$. Let $x \in [q]^n$ be such that $x|_I=v$. We will construct
a vector $\gamma_{x,I}$ such that $\phi(\gamma_{x,I})=e_a$. We will do so
by backward induction on $|I| \le t$. If $|I|=t$ we take
$$
\gamma_{x,I} := \delta_{x,I},
$$
and if $|I|<t$ we construct recursively
$$
\gamma_{x,I} := \delta_{x,I} - \sum_{K \supsetneq I, |K| \le t, x_{K} \in
    [q-1]^{K}} \gamma_{x,K}.
$$
It is easy to verify using Claim~\ref{cl:OA_delta_vecs} that
indeed $\phi(\gamma_{x,I})=e_a$ as claimed. We further claim that
$\|\gamma_{x,I}\|_2 \le 2^{t/2} (2n)^{t-|I|}$. This clearly holds if
$|I|=t$. If $|I|<t$ we bound by induction
\begin{align*}
\|\gamma_{x,I}\|_2 &\le \|\delta_{x,I}\|_2 +
    \sum_{k=|I|+1}^t \sum_{K \supset I, |K|=k} \|\gamma_{x,K}\|_2 \\
&\le 2^{t/2} \left(1 + \sum_{k=|I|+1}^t
    \binom{n-|I|}{k-|I|} (2n)^{t-k} \right) \\
&\le 2^{t/2} \left(1 + \sum_{k=|I|+1}^t
    n^{k-|T|} (2n)^{t-k} \right) \\
&\le 2^{t/2} n^{t-|I|} \left(1 +
    \sum_{k=|I|+1}^t (2)^{t-k} \right)=2^{t/2}(2n)^{t-|I|}.
\end{align*}
To conclude, we need to show that by choosing different values for $x$
such that $x|_I=v$ we can achieve many disjoint vectors which isolate
$a$. The key observation is that $\gamma_{x,I}$ is supported on elements
$b \in B$ whose hamming distance from $x$ is at most $t$. Thus, if we
choose $x_1,\ldots,x_r \in [q]^n$ such that $(x_i)|_I = v$ and such that
the hamming distance between each pair $x_i,x_j$ is at least $2t+1$, we
get that $\gamma_{x_1,I},\ldots,\gamma_{x_r,I}$ have disjoint supports. We
can achieve $r \ge q^{n-t} / n^{2t}$ by a simple greedy process: choose
$x_1,\ldots,x_r$ iteratively; after choosing $x_i$ delete all elements in
$[q]^n$ whose hamming distance from $x_i$ is at most $2t$. Since the number
of these elements is bounded by $\sum_{i=1}^{2t} \binom{n}{i} \le n^{2t}$
the claim follows.
\end{proof}

We now have all the conditions to apply Theorem~\ref{thm:main}. We have $|A|=\sum_{i=0}^t
\binom{n}{i}(q-1)^i \le (q(n+1))^t, c_0=q^t, c_1=(n+1)^{t/2}, c_2=q^t
n^{2t}, c_3=2^{3t/2} n^t$ and $m=1$. Hence we obtain that there exists
an orthogonal array $T \subset [q]^n$ of strength $t$ and size $|T|\le
(qn)^c$ for some universal constant $c>0$.

\subsection{Designs}
\label{sec:designsproof}

In this section, we prove Theorem~\ref{thm:designs_first_version}. It
suffices to prove the theorem for $k>2t$, since if $k\le 2t$ then
the complete design (the design containing all subsets of size $k$)
establishes the theorem. We use the choice of $B$ and $V$ described in
Section~\ref{sec:designs} and recall the definition~\eqref{eq:f_a_def}
of the functions $f_{a}$ of that section. We set $A=\choosesq{v}{t}$ and
note that $\sum_{a\in A} f_{a}\equiv\choosesq{k}{t}$ and thus $V$ contains
the constant functions as Theorem~\ref{thm:main} requires. As a convenient
basis for $V$ of integer-valued functions, we take $\{\phi_a\}_{a\in A}$
with $\phi_a=f_a$. By definition, $\{\phi_a\}_{a\in A}$ spans $V$ and
the fact that $\{\phi_a\}_{a\in A}$ is a basis for $V$ will be implied by
showing the isolation condition of Theorem~\ref{thm:main}.

We choose integers $m,c_0\ge 1$ and real numbers $c_1,c_2,c_3
> 0$ to satisfy the conditions of divisibility, boundedness,
symmetry and isolation in Theorem~\ref{thm:main}.  First,
$\frac{1}{|B|}\phi(B)=\binom{k}{t}/\binom{v}{t} \cdot (1,\ldots,1)$
and hence we set $c_0 = \binom{v}{t}$ so that $\frac{c_0}{|B|} \phi(B)$
is an integer vector. Second, $\|\phi(b)\|_2^2 \le |A| \le v^t$. Hence we
set $c_1=v^{t/2}$.  Third, the symmetry condition also follows simply:
let $\sigma \in S_v$ be a permutation on $[v]$. It acts naturally on
$B$ and $A$ (by permuting subsets of $[v]$) and gives two permutations
$\pi \in S_B$ and $\tilde{\pi} \in S_A$ that satisfy $\phi(\pi(b))_a =
\phi(b)_{\tilde{\pi}^{-1}(a)}$. The linear transformation $\tau \in \GL(V)$
then corresponds to the permutation $\tilde{\pi}^{-1}$.

Finally, we need to show that for each $a \in A$ there exist many disjoint
vectors which isolate it. This is accomplished in the following lemma.

\begin{lemma}\label{lem:gamma_vectors_designs}
Assume $k>2t$. For any $a\in A$ there exist vectors
$\gamma_1,\ldots,\gamma_r \in \Z^B$ with $r \ge |B|/(vk)^{2t}$
such that $\phi(\gamma_i)=\frac{k!}{(k-t)!} \cdot e_a$. Moreover,
$\gamma_1,\ldots,\gamma_r$ have disjoint supports and $\|\gamma_i\|_2 \le
(2k)^{3t/2}$ for $i \in [r]$.
\end{lemma}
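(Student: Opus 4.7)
My plan is to construct, for each $(k+t)$-subset $U\subseteq[v]$ containing $a$, a single isolating vector $\gamma_U\in\Z^B$ supported entirely on $\binom{U}{k}\subseteq B$, and then use a greedy packing to extract many $U$'s whose supports are pairwise disjoint. The motivation for this choice of support is that, restricted to $\Z^{\binom{U}{k}}$, the map $\phi$ lands in $\Z^{\binom{U}{t}}$ and acts through the square $\binom{k+t}{t}\times\binom{k+t}{t}$ inclusion matrix, which is classically invertible over $\Q$ (equivalent to the injectivity of $W_{t,k}$ on the full ground set for $v\ge k+t$; we may assume the latter by first applying complementation if $k>v/2$). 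Consequently, the system $\phi(\gamma_U)=m\cdot e_a$ has a unique rational solution supported on $\binom{U}{k}$, and every $a'\in\binom{[v]}{t}$ not contained in $U$ satisfies $\phi(\gamma_U)_{a'}=0$ automatically.

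To compute $\gamma_U$ explicitly, I would exploit the symmetry $S_a\times S_{U\setminus a}$: by uniqueness, $\gamma_U(b)$ depends only on $|a\cap b|$, so I parametrize $\gamma_U(b)=\delta_{t-|a\cap b|}$. Counting the blocks $b\in\binom{U}{k}$ that contain $a'$ and have $|a\cap b|=t-j$ shows that the isolation condition becomes the linear system
\[
\sum_{j=0}^t \delta_j \binom{t-i}{j}\binom{k-t+i}{t-j} \;=\; m\cdot\one{i=t},\qquad i\in\{0,1,\dots,t\},
\]
where $i=|a\cap a'|$. Back-substituting starting from $i=t$ (which gives $\delta_0\binom{k}{t}=m$, forcing $\delta_0=t!$ for the prescribed $m=k!/(k-t)!$) yields the closed form $\delta_j=(-1)^j(t-j)!\,(k-t)(k-t+1)\cdots(k-t+j-1)$. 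These are integers of magnitude at most $k^t$, using $(t-j)!\le k^{t-j}$ (valid since $t\le k$) and bounding each of the $j$ rising-factorial factors by $k$. Combined with the Vandermonde identity $\sum_j\binom{t}{j}\binom{k}{t-j}=\binom{k+t}{t}\le (2k)^t$, this yields $\|\gamma_U\|_2^2\le(2k)^t\cdot k^{2t}$, hence $\|\gamma_U\|_2\le(2k)^{3t/2}$.

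For the packing, $\binom{U_1}{k}\cap\binom{U_2}{k}=\emptyset$ iff $|U_1\cap U_2|<k$, equivalently $|V_i\cap V_j|\le k-t-1$ for $V_i:=U_i\setminus a\in\binom{[v]\setminus a}{k}$. A greedy argument then suffices: each previously chosen $V_j$ excludes at most $\sum_{s\ge k-t}\binom{k}{s}\binom{v-t-k}{k-s}\le(vk)^t$ further candidates out of $\binom{v-t}{k}$ total, and the Pascal-type identity $\binom{v-t}{k}/\binom{v}{k}=\binom{v-k}{t}/\binom{v}{t}\ge v^{-t}$ (using $v\ge k+t$) delivers $r\ge|B|/(vk)^{2t}$ disjoint vectors. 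The technical heart of the argument is the tight bound $|\delta_j|\le k^t$: a naive estimate like $|\delta_j|\le t!\cdot k^t$ would push $\|\gamma_U\|_2$ past $(2k)^{3t/2}$, so it is essential that the cancellation in the back-substitution delivers integer $\delta_j$ whose common denominator is exactly the prescribed $m=k!/(k-t)!$.
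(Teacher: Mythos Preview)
Your proposal is correct and is essentially the paper's proof: with $U=a\cup x$ (where $x=U\setminus a$), your vector $\gamma_U$ coincides with the paper's $\gamma_{x,a}$, since your coefficient $\delta_{t-j}=(-1)^{t-j}j!\,(k-j-1)!/(k-t-1)!$ is exactly the weight the paper attaches to $\delta_{x,a,j}$, and the norm and greedy-packing bounds are argued the same way. The one step you assert but do not verify---that the closed form $\delta_j=(-1)^j(t-j)!\prod_{s=0}^{j-1}(k-t+s)$ solves the full system for every $i\in\{0,\dots,t\}$, not just the first two---is precisely what the paper checks directly via the binomial identity $\sum_{i=0}^a(-1)^i\binom{a}{i}\binom{c+i}{b}=0$ for $a>b\ge 0$; you should either carry out the inductive back-substitution or plug in and reduce to such an identity. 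Incidentally, your disjointness criterion $|V_i\cap V_j|\le k-t-1$ is sharper than the paper's $|x_i\cap x_j|\le k-2t-1$ (the paper bounds the support by the cruder set $\{y:|y\cap x|\ge k-t\}$), and it is this sharpening, together with $\binom{v-t}{k}/\binom{v}{k}\ge v^{-t}$, that makes the count $r\ge|B|/(vk)^{2t}$ go through cleanly.
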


We will need the following technical claim for the proof of
Lemma~\ref{lem:gamma_vectors_designs}. In the following we consider
binomial coefficients $\binom{n}{m}=0$ whenever $n<m$.

\begin{claim}\label{cl:binomial_sum_designs}
Let $a>b \ge 0$ and $c \ge 0$. Then
$$
\sum_{i=0}^a (-1)^{i} \binom{a}{i} \binom{c+i}{b}=0.
$$
\end{claim}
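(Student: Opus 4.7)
The plan is to give a short generating-function proof. The key observation is that $\binom{c+i}{b}$ is the coefficient of $x^b$ in $(1+x)^{c+i}$, so I would rewrite the sum as a single coefficient extraction:
\begin{equation*}
\sum_{i=0}^a (-1)^i \binom{a}{i} \binom{c+i}{b}
   = [x^b] \sum_{i=0}^a (-1)^i \binom{a}{i} (1+x)^{c+i}
   = [x^b]\,(1+x)^c \sum_{i=0}^a \binom{a}{i} \bigl(-(1+x)\bigr)^i.
\end{equation*}
The inner sum collapses by the binomial theorem to $\bigl(1-(1+x)\bigr)^a = (-x)^a$, so the entire expression equals $(-1)^a [x^b]\,x^a (1+x)^c = (-1)^a [x^{b-a}] (1+x)^c$. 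Since $b-a<0$ by hypothesis and $(1+x)^c$ has no negative powers of $x$, this coefficient is $0$, which is exactly the claim.

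As an alternative I would mention the conceptual reason: $\binom{c+i}{b}$ is a polynomial in $i$ of degree $b$, and the operator $P \mapsto \sum_{i=0}^a (-1)^i \binom{a}{i} P(i)$ is (up to sign) the $a$-th iterated forward difference $\Delta^a P$ evaluated at $0$. Since the $a$-th difference annihilates every polynomial of degree strictly less than $a$, and $b<a$, the sum vanishes. Either formulation yields the same one-line argument.

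There is no real obstacle here; the only mild care needed is the convention $\binom{n}{m}=0$ for $n<m$ used in the statement, which is consistent with the coefficient-extraction reading $\binom{c+i}{b} = [x^b](1+x)^{c+i}$, so no case analysis on whether $c+i \ge b$ is required. I would present the generating-function version as the main proof because it is the cleanest and makes the hypothesis $a>b$ transparent.
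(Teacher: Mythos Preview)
Your proof is correct and takes a genuinely different route from the paper. The paper argues by induction: writing $f(a,b,c)$ for the sum, Pascal's rule $\binom{c+i}{b}=\binom{c-1+i}{b}+\binom{c-1+i}{b-1}$ yields the recursion $f(a,b,c)=f(a,b,c-1)+f(a,b-1,c-1)$, reducing the claim to the two base cases $b=0$ (where the sum is $\sum_i (-1)^i\binom{a}{i}=0$ since $a\ge 1$) and $c=0$ (where it becomes $\binom{a}{b}\sum_{j}(-1)^j\binom{a-b}{j}=0$). Your generating-function computation, and its finite-difference restatement, is more direct and more conceptual: it makes transparent that the identity holds exactly because $i\mapsto\binom{c+i}{b}$ is a polynomial of degree $b<a$ and the alternating binomial sum is the $a$-th finite difference. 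The paper's inductive argument is marginally more elementary in that it uses nothing beyond Pascal's identity and the vanishing of $\sum_i(-1)^i\binom{n}{i}$, but your version is shorter and explains why the hypothesis $a>b$ is precisely the right one.
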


\begin{proof}
Let $f(a,b,c) = \sum_{i=0}^a (-1)^i \binom{a}{i} \binom{c+i}{b}$. If
$b,c>0$ we have $\binom{c+i}{b}=\binom{c-1+i}{b}+\binom{c-1+i}{b-1}$ and
hence $f(a,b,c) = f(a,b,c-1)+f(a,b-1,c-1)$.  So, it is enough to verify the
claim whenever $b=0$ or $c=0$. If $b=0$ then $f(a,0,c)=\sum_{i=0}^a (-1)^i
\binom{a}{i}=0$ since $a \ge 1$. If $c=0$ then $f(a,b,0)=\sum_{i=b}^a
(-1)^i \binom{a}{i} \binom{i}{b} = \binom{a}{b} \sum_{i=b}^a (-1)^i
\binom{a-b}{i-b}=0$.
\end{proof}

\begin{proof}[Proof of Lemma~\ref{lem:gamma_vectors_designs}]
Let $a \in A=\choosesq{v}{t}$ be a coordinate we wish to isolate. Let $x \in
\choosesq{v}{k}$ be a set disjoint from $a$ and let $0 \le j \le t$. Define
$\delta_{x,a,j} \in \Z^B$ to be the indicator vector for all subsets $b \in
B=\choosesq{v}{k}$ such that $b \subset a \cup x$ and $|a \cap b|=j$, that is
$$
\delta_{x,a,j} := \sum_{b \subset a \cup x, |b|=k, |a \cap b|=j} e_b.
$$
We define vectors $\gamma_{a,x} \in \Z^B$ as
$$
\gamma_{x,a} := \sum_{j=0}^t (-1)^{t-j} \frac{j! (k-j-1)!}{(k-t-1)!}
\delta_{x,a,j}.
$$
We will shortly show that
$$
\phi(\gamma_{x,a}) = \frac{k!}{(k-t)!} e_a.
$$
First we bound the norm of $\gamma_{x,a}$ and show the existence of
many disjoint vectors. It is easy to check that $\|\gamma_{x,a}\|_2 \le
(2k)^{3t/2}$. Also, the vector $\gamma_{x,a}$ is supported on coordinates $y
\in B$ such that $|y \cap x| \ge k-t$. Thus, if we choose $x_1,\ldots,x_r
\in B$ such that $|x_i \cap x_j| \le k-2t-1$ we get that the vectors
$\gamma_{x_1,a},\ldots,\gamma_{x_r,a}$ have disjoint support. We can choose
$r \ge |B|/(vk)^{2t}$ by a simple greedy argument: choose $x_1,\ldots,x_r$
iteratively, where in each step after choosing $x_i$ we remove all subsets
$y \in B$ whose intersection with $x_i$ is at least $k-2t$. The number of
subsets eliminated in each step is at most $(vk)^{2t}$ hence we will get
$r \ge |B|/(vk)^{2t}$.

To conclude the proof, we need to compute $\phi(\gamma_{x,a})$. Let
$a' \in A$. Clearly if $a' \not \subseteq a \cup x$ then
$\phi(\gamma_{x,a})_{a'}=0$. We thus assume that $a' \subset a \cup
x$. Let $\ell=|a \cap a'|$ where $0 \le \ell \le t$. We have that
$\phi(\delta_{x,a,j})_{a'}=0$ if $j<\ell$, and that
\begin{align*}
\phi(\delta_{x,a,j})_{a'} &= |\{b \in B: a' \subset b \subset a \cup x,
    |a \cap b|=j\}|\\
&= \binom{t-\ell}{t-j}\binom{k-t+\ell}{j}.
\end{align*}
Hence we have that
\begin{equation}\label{eq:gamma_x_a_atag}
\phi(\gamma_{x,a})_{a'} = \frac{(k-1)!}{(k-t-1)!} \sum_{j=\ell}^t (-1)^{t-j}
    \frac{\binom{t-\ell}{t-j}\binom{k-t+\ell}{j}}{\binom{k-1}{j}}
\end{equation}
If $a'=a$ then $\phi(\gamma_{x,a})_a = k!/(k-t)!$ as claimed. To conclude
we need to prove that if $a' \ne a$ then $\phi(\gamma_{x,a})_{a'} = 0$. We
have $\ell=|a \cap a'|<t$ and let $s=t-\ell>0$. Thus
\begin{align*}
\phi(\gamma_{x,a})_{a'} &= (-1)^t \frac{(k-1)!}{(k-t-1)!}
    \sum_{j=\ell}^t (-1)^{j} \frac{\binom{t-\ell}{t-j}
    \binom{k-t+\ell}{j}}{\binom{k-1}{j}}\\
&= (-1)^{s} \frac{(k-1)!}{(k-t-1)!} \sum_{j=0}^{s} (-1)^j
    \frac{\binom{s}{j}\binom{k-s}{j+\ell}}{\binom{k-1}{j+\ell}}\\
&= (-1)^{s} \frac{(k-1)!}{(k-t-1)!\binom{k-1}{s-1}}
    \sum_{j=0}^{s} (-1)^j \binom{s}{j}\binom{k-\ell-1-j}{s-1}\\
&= (-1)^{s} \frac{(k-1)!}{(k-t-1)!\binom{k-1}{s-1}}
    \sum_{j=0}^{s} (-1)^j \binom{s}{j}\binom{k-\ell-1-s+j}{s-1}.
\end{align*}
We now apply Claim~\ref{cl:binomial_sum_designs} with
$a=s,b=s-1,c=k-\ell-1-s$ and conclude that $\phi(\gamma_{x,a})_{a'}=0$.
\end{proof}

We are now ready to
apply Theorem~\ref{thm:main}. We have $|A|=\binom{v}{t},c_0=\binom{v}{t},
c_1=v^{t/2}, c_2=(vk)^{2t}, c_3=(2k)^{3t/2}$ and $m=k!/(k-t)!$. Thus the
theorem implies the existence of a $t-(v,k,\lambda)$ design $T \subset B$
with $|T|\le v^{ct}$ for some universal constant $c>0$.

\section{Proof of Main Theorem}
\label{sec:proof_main_theorem}

We prove Theorem~\ref{thm:main} in this section. We recall the settings:
$B$ is a finite set and $V$ is a vector space of functions from $B$
to $\Q$. We assume the space $V$ is spanned by integer valued functions
$\{\phi_a:B \to \Z\}_{a \in A}$, where $A$ is a finite index set. We also
assume that the constant functions belong to $V$.

The proof strategy is conceptually simple: choose $T$ randomly and show
that this choice is successful with positive probability. Let $N$ be
the target size of $T$, to be chosen later. Let each $b \in B$ be chosen
to be in $T$ independently with probability $p:=N/|B|$. Identifying $T$
with its indicator vector in $\{0,1\}^B$, we have that $T_b \in \{0,1\}$
with $\Pr[T_b=1]=p$. Define $X=\phi(T) \in \Z^A$ and note that $\Ex[X]=p
\cdot \phi(B)$. In order to prove Theorem~\ref{thm:main} we need to show that
\begin{equation}\label{eq:pr_X_EX}
\Pr[X = \Ex[X]]>0.
\end{equation}
We make two notes: first, since we assume that constant functions belong
to $V$ we have that if $X=\Ex[X]$ then in particular $|X|=p|B|=N$. Second,
in order for~\eqref{eq:pr_X_EX} to hold we must have that $\Ex[X]$ is an
integer vector. Thus, we must choose $N$ to be divisible by $c_0$.

The difficulty with establishing~\eqref{eq:pr_X_EX} comes from the fact
that we require $A$ different events to occur simultaneously: for all $a
\in A$ we require that $X_a = \Ex[X_a]$. To better explain the challenge,
consider momentarily for simplicity the case where $\phi(b) \in \{0,1\}^A$
for all $b \in B$ and that for each $a \in A$, $\Pr_{b \in B}[\phi(b)_a=1]=q$
(that is, all columns of $\phi$ have $qB$ ones). Then each individual $X_a$
is binomially distributed, $X_a \sim \mathrm{Bin}(|B|, pq)$, and it is
not hard to see that
$$
\Pr[X_a = \Ex[X_a]] \approx \frac{1}{\sqrt{q N}}.
$$
However, we need the events $X_a=\Ex[X_a]$ to occur simultaneously for all $a
\in A$. The problem arises because these events are dependent, and general
techniques for handling such dependencies (for example, the Lov\'{a}sz
local lemma) only work when each event depends only on a few other events
(which is not the case here) and where each event holds with sufficiently
high probability (which is also not the case here). What we show is that,
under the conditions of Theorem~\ref{thm:main}, if we choose $N$ large
enough (but only polynomially large in $|A|,m,c_0,c_1,c_2,c_3$) then all
the events $X_a=\Ex[X_a]$ become essentially independent, and we show that
$$
\Pr[X = \Ex[X]] \approx \prod_{a \in A}
    \Pr[X_a = \Ex[X_a]] \approx (\frac{1}{\sqrt{qN}})^{|A|}.
$$
The actual expression we get is somewhat more complicated as it also
involves pairwise correlations between the different events $X_a$, but
conceptually it is of a similar flavor.

Our main technique to study the distribution of the random variable $X
\in \Z^A$ is Fourier analysis. We recall some basic facts about Fourier
analysis on $\Z^A$.

\begin{fact}[Fourier analysis on $\Z^A$]
Let $X \in \Z^A$ be a random variable. The Fourier coefficients of $X$ live
in the $A$-dimensional torus. Let $\T=[-1/2,1/2)$ denote the torus. The
Fourier coefficients $\hat{X}(\theta)$ for $\theta \in \T^A$ are given by
$$
\hat{X}(\theta) = \Ex_X[e^{2 \pi i \ip{X,\theta}}],
$$
where $\ip{X,\theta}=\sum_{a \in A} X_a \theta_a$. The probability that
$X=\lambda$ for $\lambda \in \Z^A$ is given by the Fourier inversion formula
$$
\Pr[X=\lambda] = \int_{\theta \in \T^A} \hat{X}(\theta)
    e^{-2 \pi i \ip{\lambda,\theta}} d \theta.
$$
\end{fact}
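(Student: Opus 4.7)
The plan is to verify the inversion formula by direct calculation, starting from the right-hand side and substituting the definition of $\hat{X}(\theta)$. First I would observe that because $X$ is $\Z^A$-valued, each component of the exponent $e^{2\pi i \ip{X,\theta}}$ is bounded by $1$ in modulus, so $\hat{X}(\theta) = \Ex[e^{2\pi i \ip{X,\theta}}]$ is well-defined for every $\theta \in \T^A$ and satisfies $|\hat{X}(\theta)| \le 1$. In particular $\hat{X}$ is a bounded, measurable function on the compact torus $\T^A$, hence integrable, and the product of $\hat{X}(\theta)$ with the character $e^{-2\pi i \ip{\lambda,\theta}}$ is also bounded and jointly measurable in $(\omega,\theta)$. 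This guarantees that Fubini's theorem applies.

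Next I would compute
\begin{equation*}
\int_{\T^A} \hat{X}(\theta)\, e^{-2\pi i \ip{\lambda,\theta}}\, d\theta
= \int_{\T^A} \Ex\bigl[e^{2\pi i \ip{X-\lambda,\theta}}\bigr]\, d\theta
= \Ex\left[\int_{\T^A} e^{2\pi i \ip{X-\lambda,\theta}}\, d\theta\right],
\end{equation*}
where the second equality is Fubini. Since the inner exponential factors as $\prod_{a\in A} e^{2\pi i (X_a-\lambda_a)\theta_a}$ and the domain $\T^A$ is a product, the integral splits as
\begin{equation*}
\int_{\T^A} e^{2\pi i \ip{X-\lambda,\theta}}\, d\theta = \prod_{a\in A} \int_{-1/2}^{1/2} e^{2\pi i (X_a-\lambda_a)\theta_a}\, d\theta_a.
\end{equation*}

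The final step is the orthogonality of characters on $\T$: for any integer $n$,
\begin{equation*}
\int_{-1/2}^{1/2} e^{2\pi i n \theta}\, d\theta = \one{n=0},
\end{equation*}
which is immediate by direct antidifferentiation when $n\neq 0$ and trivial when $n=0$. Since $X_a-\lambda_a$ is always an integer, each factor in the product equals $\one{X_a=\lambda_a}$, and the full product is therefore $\one{X=\lambda}$. Taking expectation yields $\Pr[X=\lambda]$, completing the derivation. The formula for $\hat{X}(\theta)$ itself is taken as the definition of the Fourier transform of the law of $X$, so no separate argument is required for it. There is no real obstacle here: the only mild subtlety is invoking Fubini, which is justified by the uniform boundedness of the integrand noted at the start.
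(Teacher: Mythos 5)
Your proposal is correct: defining $\hat{X}$ as the characteristic function of the law of $X$ and then deriving inversion via Fubini (justified by the uniform bound $|e^{2\pi i\ip{X-\lambda,\theta}}|=1$ on the product of a probability space with the finite-measure torus) together with the orthogonality relation $\int_{-1/2}^{1/2} e^{2\pi i n\theta}\,d\theta=\one{n=0}$ for integer $n$ is exactly the standard argument. The paper states this as a Fact without proof, so there is nothing to contrast with; your derivation supplies precisely the routine justification the authors take for granted.
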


Recall that our goal is to understand the probability that
$X=\Ex[X]$. Applying the Fourier inversion formula for $\lambda=\Ex[X]$ gives
\begin{equation}\label{eq:fourier_inversion}
\Pr[X=\Ex[X]] = \int_{\theta \in \T^A} \hat{X}(\theta)
    e^{-2 \pi i \ip{\Ex[X],\theta}} d \theta.
\end{equation}
Thus, our goal from now on is to understand the Fourier coefficients of $X$.
We first give an explicit formula for the Fourier coefficients.

\begin{claim}\label{cl:structure_fourier_X}
We have
$$
\hat{X}(\theta) = \prod_{b \in B}
    (1-p+p e^{2 \pi i \cdot \ip{\phi(b),\theta}}).
$$
\end{claim}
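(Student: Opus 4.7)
The plan is a one-line calculation that unpacks the definition of $X$ in terms of the independent Bernoulli indicators $T_b$. Recall that $T$ is obtained by putting each $b\in B$ into $T$ independently with probability $p$, so if we identify $T$ with its indicator vector, then $T_b$ are i.i.d.\ Bernoulli$(p)$ random variables. By linearity of $\phi$,
$$
X = \phi(T) = \sum_{b \in B} T_b \phi(b),
$$
and therefore
$$
\ip{X,\theta} = \sum_{b \in B} T_b \ip{\phi(b),\theta}.
$$

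Next I would plug this into the definition of the Fourier coefficient and exploit independence. Since the exponential of a sum is the product of exponentials, and the $T_b$ are mutually independent,
$$
\hat{X}(\theta) = \Ex\!\left[e^{2\pi i \ip{X,\theta}}\right]
= \Ex\!\left[\prod_{b\in B} e^{2\pi i T_b \ip{\phi(b),\theta}}\right]
= \prod_{b\in B} \Ex\!\left[e^{2\pi i T_b \ip{\phi(b),\theta}}\right].
$$

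Finally, each factor is computed directly from the two-point distribution of $T_b$: with probability $1-p$ the exponent is $0$ and the factor equals $1$, and with probability $p$ it equals $e^{2\pi i \ip{\phi(b),\theta}}$. Combining these gives the claimed formula
$$
\hat{X}(\theta) = \prod_{b\in B}\bigl(1 - p + p\, e^{2\pi i \ip{\phi(b),\theta}}\bigr).
$$
There is no real obstacle here; this claim is essentially just recording the product structure that the independent-inclusion model hands to us, and its importance lies in what comes next, namely using this explicit product form to estimate $|\hat{X}(\theta)|$ and then bound the Fourier integral \eqref{eq:fourier_inversion}.
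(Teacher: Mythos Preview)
Your proof is correct and follows exactly the same approach as the paper: write $X=\sum_{b\in B} T_b\phi(b)$, factor the exponential using independence of the $T_b$, and evaluate each factor from the Bernoulli distribution. There is nothing to add.
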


\begin{proof}
By definition $X=\phi(T)=\sum_{b \in B} T_b \phi(b)$, where $T_b \in \{0,1\}$
are independent with $\Pr[T_b=1]=p$. Thus
\begin{align*}
\hat{X}(\theta) &= \Ex_X[e^{2 \pi i \ip{X,\theta}}] = \Ex_{\{T_b: b \in B\}}
    [e^{2 \pi i \sum_{b \in B} T_b \ip{\phi(b),\theta}}]\\
&= \prod_{b \in B} \Ex_{T_b}[e^{2 \pi i \; T_b \ip{\phi(b),\theta}}]
= \prod_{b \in B} (1-p+pe^{2 \pi i \ip{\phi(b),\theta}}).
\end{align*}
\end{proof}

Clearly all Fourier coefficients of $X$ have absolute value at most
$1$. The first step is to understand the maximal Fourier coefficients of
$X$, that is $\theta$ for which $|\hat{X}(\theta)|=1$.

\begin{claim}
\label{cl:properties_L}
Let $L:=\{\theta \in \T^A: \hat{X}(\theta)=1\}$. Then
\begin{itemize}
\item If $\theta \notin L$ then $|\hat{X}(\theta)|<1$.
\item If $\theta \in L, \theta' \in \T^A$ then
$\hat{X}(\theta+\theta')=\hat{X}(\theta')$. In particular, $L$ is a subgroup
of $\T^A$.
\end{itemize}
\end{claim}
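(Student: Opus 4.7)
The plan is to combine the explicit product formula from Claim~\ref{cl:structure_fourier_X} with a simple convexity observation on the unit disc. Each factor
\[
F_b(\theta):=1-p+p\,e^{2\pi i\langle\phi(b),\theta\rangle}
\]
is a convex combination of the two points $1$ and $e^{2\pi i\langle\phi(b),\theta\rangle}$ on the closed unit disc; since $0<p<1$ (assuming $0<N<|B|$, the only interesting case), by strict convexity $|F_b(\theta)|\le 1$, with equality if and only if $e^{2\pi i\langle\phi(b),\theta\rangle}=1$, i.e., $\langle\phi(b),\theta\rangle\in\Z$. Moreover, when this holds we actually get $F_b(\theta)=1$, not merely a complex number of modulus $1$.

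From this I would immediately read off the characterization
\[
L=\{\theta\in\T^A:\langle\phi(b),\theta\rangle\in\Z\text{ for every }b\in B\}.
\]
Indeed, taking the product over $b\in B$, we have $|\hat X(\theta)|=\prod_b |F_b(\theta)|\le 1$, with equality forcing $|F_b(\theta)|=1$ for every $b$, which in turn forces $F_b(\theta)=1$ for every $b$ and hence $\hat X(\theta)=1$. This proves the first bullet, since any $\theta\notin L$ must have some factor with $|F_b(\theta)|<1$.

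For the second bullet, suppose $\theta\in L$, so that $\langle\phi(b),\theta\rangle\in\Z$ for all $b\in B$. Then for any $\theta'\in\T^A$ and every $b$,
\[
F_b(\theta+\theta')=1-p+p\,e^{2\pi i\langle\phi(b),\theta\rangle}\,e^{2\pi i\langle\phi(b),\theta'\rangle}=1-p+p\,e^{2\pi i\langle\phi(b),\theta'\rangle}=F_b(\theta'),
\]
and multiplying over $b$ yields $\hat X(\theta+\theta')=\hat X(\theta')$, as claimed. Specializing this identity to $\theta'\in L$ shows that $\theta+\theta'\in L$; taking $\theta'=-\theta$ (or noting directly that $L$ is closed under negation, since $\langle\phi(b),-\theta\rangle\in\Z$ too) shows $L$ contains inverses; and $0\in L$ is immediate. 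Hence $L$ is a subgroup of $\T^A$.

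I do not anticipate a real obstacle here: the entire argument is a one-line convexity remark plus the algebraic identity $e^{2\pi i k}=1$ for $k\in\Z$. The only mild point to verify is the boundary cases $p\in\{0,1\}$, which we may exclude from the outset as the theorem is trivial (or vacuous) there.
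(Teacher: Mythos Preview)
Your proof is correct and follows essentially the same approach as the paper: the paper's proof is the single sentence ``Both claims follow immediately from the observation that $\theta\in L$ iff $\langle\phi(b),\theta\rangle\in\Z$ for all $b\in B$,'' and you have simply supplied the details behind that observation via the product formula and the strict convexity of the unit disc.
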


\begin{proof} Both claims follow immediately from the observation that
$\theta \in L$ iff $\ip{\phi(b),\theta} \in \Z$ for all $b \in B$.
\end{proof}

In fact, the isolation conditions in Theorem~\ref{thm:main} imply that $L$
is a discrete subgroup of $\T^A$ (i.e. a lattice). Let $M:=(1/m \cdot
\Z)^A$ be the lattice in $\T^A$ of all elements whose coordinates are
integer multiplies of $1/m$. We show that $L$ is a sublattice of $M$.

\begin{claim}\label{cl:L_in_M}
$L \subseteq M$.
\end{claim}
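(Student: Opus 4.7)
The plan is to unpack the membership condition for $L$ using the previous claim and then feed the isolation vectors into it. By the observation in the proof of Claim~\ref{cl:properties_L}, $\theta \in L$ is equivalent to $\langle \phi(b), \theta\rangle \in \Z$ for every $b \in B$. Since the map $\gamma \mapsto \phi(\gamma) = \sum_{b \in B} \gamma_b \phi(b)$ is $\Z$-linear, this extends at once to the statement that $\langle \phi(\gamma), \theta\rangle \in \Z$ for every $\gamma \in \Z^B$.

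Next I would apply this to the isolation vectors. Fix an arbitrary coordinate $a \in A$. By the isolation hypothesis of Theorem~\ref{thm:main}, there exists some $\gamma \in \Z^B$ (any of the $\gamma_i$ produced by isolation will do; we only need one) with $\phi(\gamma) = m\cdot e_a$. Plugging this $\gamma$ into the statement from the first step yields
$$
m\,\theta_a \;=\; \langle m e_a, \theta\rangle \;=\; \langle \phi(\gamma), \theta\rangle \;\in\; \Z,
$$
so $\theta_a \in \tfrac{1}{m}\Z$. Since $a$ was arbitrary, every coordinate of $\theta$ lies in $\tfrac{1}{m}\Z$, i.e.\ $\theta \in M$, which is precisely the claim.

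There is no real obstacle here: the entire content of the claim is that the isolation condition promotes the a~priori statement ``$\langle \phi(b), \theta\rangle \in \Z$ for all $b$'' into an integrality statement about the individual coordinates $\theta_a$, with $m$ being the common denominator. In the write-up I would only need to be careful to cite Claim~\ref{cl:properties_L} (for the characterisation of $L$) and the isolation bullet of Theorem~\ref{thm:main} (for the existence of $\gamma$ with $\phi(\gamma) = m e_a$); the boundedness, symmetry, and disjointness parts of isolation are not used at this stage and will be exploited later when bounding $|\hat X(\theta)|$ off the lattice $L$.
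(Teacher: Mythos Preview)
Your proposal is correct and follows essentially the same argument as the paper: characterize $\theta\in L$ by $\ip{\phi(b),\theta}\in\Z$ for all $b$, extend by $\Z$-linearity to $\ip{\phi(\gamma),\theta}\in\Z$ for all $\gamma\in\Z^B$, and then apply an isolation vector with $\phi(\gamma)=m e_a$ to conclude $m\theta_a\in\Z$. Your remark that only the bare existence of one isolation vector (not the disjointness, multiplicity, or norm bounds) is used here is accurate.
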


\begin{proof} Let $\theta \in L$. We need to show that $m \theta_a \in \Z$
for all $a \in A$. By the isolation condition of Theorem~\ref{thm:main},
there exists $\gamma \in \Z^B$ such that $\phi(\gamma)=m e_a$. Since $\theta
\in L$ we have that $\ip{\phi(b),\theta} \in \Z$ for all $b \in B$. Hence
also $\ip{\phi(\gamma),\theta} \in \Z$, i.e. $m \theta_a \in \Z$ as claimed.
\end{proof}

The first step we take is to approximate the Fourier coefficients of $X$
near the lattice $L$. This will assume very little about $\phi$, essentially
only boundedness. The second (and more complex) step will be to show that
all other Fourier coefficients are negligible, and in fact the contribution
to $\eqref{eq:fourier_inversion}$ all come from Fourier coefficients near
$L$. The second part will heavily utilize the symmetry of the map $\phi$
and the existence of many disjoint isolation vectors. Theorem~\ref{thm:main}
then follows by a careful setting of parameters and a routine calculation.

Formally, we will use $\ell_2$ distance on $\T^A$. For $x
\in \T$ define its absolute value $|x|=|x \pmod{1}|$ to be the minimal
absolute value of $x$ modulo $1$ (that is, we take $x \pmod{1} \in
[-1/2,1/2]$). Define the distance between $\theta',\theta'' \in \T^A$ by
$$
d(\theta',\theta'') := \sqrt{\sum_{a \in A} |\theta'_a-\theta''_a|^2}.
$$
The distance between $\theta \in \T^A$ and $L \subset \T^A$ is given by
$$
d(\theta,L) := \min_{\alpha \in L} d(\theta,\alpha).
$$

The following three lemmas are the main technical ingredients of the
proof. The first lemma gives a good approximation for the Fourier
coefficients of $X$ near zero (and by Claim~\ref{cl:properties_L},
near any point in $L$).

\begin{lemma}[Estimating Fourier coefficients near zero]
\label{lem:estimate_fourier_near_zero}
Assume the conditions of Theorem~\ref{thm:main} and fix $\eps \le O(1/(c_1
N^{1/3}))$. Let $\theta \in \T^A$ be such that $\|\theta\|_2 \le \eps$. Then
$$
\hat{X}(\theta) = e^{2 \pi i \ip{\Ex[X],\theta}}
    e^{-4 \pi^2 p \cdot \theta^T R \theta}(1+\delta)
$$
where $R$ is the $A \times A$ pairwise-correlation matrix of $\phi$ given
by $R_{a',a''}=\sum_{b \in B} \phi(b)_{a'} \phi(b)_{a''}$, and where
$|\delta|=O(N^2/|B|+N c_1^3 \eps^3)$.
\end{lemma}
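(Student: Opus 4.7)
The plan is to start from the product formula
\[
\hat{X}(\theta) = \prod_{b \in B}\bigl(1-p+pe^{2\pi i \ip{\phi(b),\theta}}\bigr)
\]
of Claim~\ref{cl:structure_fourier_X}, take the logarithm of each factor, Taylor-expand to third order in $\ip{\phi(b),\theta}$, and then re-exponentiate, reading off the linear and quadratic terms as the claimed main factors and bundling everything else into $(1+\delta)$.

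First I would set $z_b := \ip{\phi(b),\theta}$ and observe via Cauchy--Schwarz and the boundedness hypothesis that $|z_b|\le \|\phi(b)\|_2\|\theta\|_2\le c_1\eps$. Under the standing assumption $\eps\le O(1/(c_1 N^{1/3}))$ this is small, so each factor $1+w_b$ with $w_b := p(e^{2\pi i z_b}-1)$ satisfies $|w_b|\ll 1$ and the principal branch of $\log$ is well defined. I would then expand $e^{2\pi i z_b} = 1+2\pi i z_b - 2\pi^2 z_b^2+O(|z_b|^3)$ together with $\log(1+w) = w-w^2/2+O(|w|^3)$, substitute, and collect by order in $z_b$ to obtain a per-$b$ identity of the shape
\[
\log(1+w_b) = 2\pi i p z_b - 2\pi^2 p z_b^2 + O(p^2 z_b^2) + O(p|z_b|^3).
\]
Summing over $b\in B$, the linear part becomes $2\pi i p\,\ip{\phi(B),\theta} = 2\pi i\ip{\Ex[X],\theta}$ using $\Ex[X]=p\,\phi(B)$, and the quadratic part becomes a scalar multiple of $p\,\theta^\top R\theta$, since $\sum_b z_b^2 = \theta^\top R\theta$ by the definition of $R$. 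This matches the Gaussian exponent in the lemma up to the explicit absolute constant.

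The main obstacle is the error bookkeeping, because the product has $|B|$ factors, which is potentially enormous. The cubic Taylor remainder at each $b$ is $O(p|z_b|^3)$, summing to $O(p|B|c_1^3\eps^3) = O(Nc_1^3\eps^3)$, which is the second term of $\delta$. The $p^2$ correction to the quadratic coefficient contributes $O(p^2\,\theta^\top R\theta)$; since $\theta^\top R\theta\le |B|c_1^2\eps^2$, this is $O(N^2c_1^2\eps^2/|B|) = O(N^2/|B|)$ under the $\eps$ hypothesis, matching the first term of $\delta$. Re-exponentiation then converts these additive-in-the-log errors into a multiplicative factor $1+\delta$ via $|e^x-1|\le 2|x|$ for bounded $|x|$, which is valid precisely because the total error stays $O(1)$ in the given range of $\eps$. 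The cube-root threshold $\eps\le O(1/(c_1 N^{1/3}))$ is sharp in the sense that a quadratic expansion alone would leave a residual summing to $\Omega(Nc_1^2\eps^2)$ which becomes uncontrollable after re-exponentiation; going to third order and balancing $Nc_1^3\eps^3 = O(1)$ is exactly what forces this threshold.
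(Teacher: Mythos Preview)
Your proposal is correct and follows essentially the same approach as the paper: start from the product formula of Claim~\ref{cl:structure_fourier_X}, Taylor-expand each factor to third order in $\ip{\phi(b),\theta}$, identify the linear part as the phase $e^{2\pi i\ip{\Ex[X],\theta}}$ and the quadratic part as the Gaussian $e^{-c\,p\,\theta^\top R\theta}$, and collect the $O(p^2 z_b^2)$ and $O(p|z_b|^3)$ remainders into $\delta$. The only cosmetic difference is that the paper first multiplies through by $e^{-2\pi i p\ip{\phi(b),\theta}}$ and expands the combined factor $e^{-ipx}(1-p+pe^{ix})$ directly as $e^{-p x^2}(1+O(p^2x^2+px^3))$, whereas you take $\log(1+w_b)$ and expand that; the two computations are equivalent and yield identical error terms.
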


The second lemma bounds the Fourier coefficients of $X$ far from the
lattice $M$.

\begin{lemma}[Bounding Fourier coefficients far from M]
\label{lem:bound_fourier_far_M}
Assume the conditions of Theorem~\ref{thm:main}. Let $\theta \in \T^A$
be such that $d(\theta,M) \ge \eps$. Then
$$
|\hat{X}(\theta)| \le
    \exp\left(-N \eps^2 \cdot \frac{m^2}{|A| c_2 c_3^2}\right).
$$
\end{lemma}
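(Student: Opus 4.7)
The plan is to combine Claim~\ref{cl:structure_fourier_X} with the isolation hypothesis, following the standard intuition that $|\hat X(\theta)|$ is small whenever the angles $\alpha_b := \ip{\phi(b),\theta}$ are spread out modulo~$1$. Start from
$$
|1-p+pe^{2\pi i\alpha}|^2 = 1 - 2p(1-p)\bigl(1-\cos(2\pi\alpha)\bigr),
$$
so that Claim~\ref{cl:structure_fourier_X} and $\log(1-y)\le -y$ give
$$
\log|\hat X(\theta)|\ \le\ -p(1-p)\sum_{b\in B}\bigl(1-\cos(2\pi\alpha_b)\bigr)
\ \le\ -\tfrac{p}{2}\sum_{b\in B}\bigl(1-\cos(2\pi\alpha_b)\bigr)
$$
(using $p\le 1/2$, which we may assume since otherwise the bound is trivial for our eventual choice of $N$). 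Thus everything reduces to lower bounding $\sum_b (1-\cos 2\pi\alpha_b)$, for which I will use the elementary inequality $1-\cos(2\pi x)\ge 8\|x\|_\T^2$, where $\|x\|_\T$ is the distance from $x$ to the nearest integer.

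Next I localize the hypothesis $d(\theta,M)\ge\eps$ to a single coordinate. Because the nearest point of $M=(\frac1m\Z)^A$ to $\theta$ has coordinates equal to the nearest multiple of $1/m$, we have $d(\theta,M)^2=\sum_a \|\theta_a\|_{1/m}^2$ where $\|\theta_a\|_{1/m}$ denotes distance to $(1/m)\Z$. Hence there exists $a^*\in A$ with $\|\theta_{a^*}\|_{1/m}\ge \eps/\sqrt{|A|}$, which in turn yields $\|m\theta_{a^*}\|_\T\ge m\eps/\sqrt{|A|}$ (we may assume this quantity is at most $1/2$, else cap it at $1/2$ and absorb a constant). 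Now apply the isolation condition to $a^*$: pick disjoint-support vectors $\gamma_1,\dots,\gamma_r\in\Z^B$ with $r\ge |B|/c_2$, $\|\gamma_i\|_2\le c_3$ and $\phi(\gamma_i)=m\,e_{a^*}$. Writing $S_i$ for $\supp(\gamma_i)$, linearity gives
$$
\sum_{b\in S_i}\gamma_i(b)\alpha_b \;=\; \ip{\phi(\gamma_i),\theta}\;=\;m\,\theta_{a^*},
$$
so by the triangle inequality on $\T$ followed by Cauchy--Schwarz,
$$
\frac{m\eps}{\sqrt{|A|}}\;\le\;\|m\theta_{a^*}\|_\T
\;\le\;\sum_{b\in S_i}|\gamma_i(b)|\,\|\alpha_b\|_\T
\;\le\;\|\gamma_i\|_2\,\Bigl(\sum_{b\in S_i}\|\alpha_b\|_\T^2\Bigr)^{1/2},
$$
which rearranges to $\sum_{b\in S_i}\|\alpha_b\|_\T^2 \ge m^2\eps^2/(|A|\,c_3^2)$.

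Finally I sum over $i$ using that the $S_i$ are pairwise disjoint:
$$
\sum_{b\in B}\bigl(1-\cos(2\pi\alpha_b)\bigr)\;\ge\;8\sum_{i=1}^{r}\sum_{b\in S_i}\|\alpha_b\|_\T^2
\;\ge\;8r\cdot\frac{m^2\eps^2}{|A|\,c_3^2}
\;\ge\;\frac{8|B|\,m^2\eps^2}{|A|\,c_2\,c_3^2}.
$$
Plugging this into the $\log|\hat X(\theta)|$ bound above and using $p|B|=N$ gives $|\hat X(\theta)|\le\exp\bigl(-cN\eps^2 m^2/(|A|c_2 c_3^2)\bigr)$ for an absolute constant $c>0$, which is the desired estimate up to a harmless constant that can be absorbed (or tightened by keeping all constants).

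The main technical nuisance, rather than any deep obstacle, is the careful bookkeeping on the torus: one must verify that $\|\cdot\|_\T$ satisfies the needed triangle inequality after scaling by integer coefficients $\gamma_i(b)$, and handle the boundary case $m\eps/\sqrt{|A|}\ge 1/2$ by clipping. Everything else is a clean chain of Cauchy--Schwarz and elementary trigonometric inequalities, and the power of the hypothesis is entirely concentrated in the step where isolation lets us replicate the one ``bad'' coordinate $a^*$ across $r\gtrsim |B|/c_2$ disjoint pieces of $B$.
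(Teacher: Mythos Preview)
Your argument is correct and follows essentially the same route as the paper's proof in Section~\ref{sec:bound_fourier_far_M}: localize $d(\theta,M)\ge\eps$ to a single coordinate $a^*$, use each isolation vector $\gamma_i$ together with Cauchy--Schwarz to force $\sum_{b\in S_i}\|\alpha_b\|_\T^2\ge m^2\eps^2/(|A|c_3^2)$, sum over the $r\ge|B|/c_2$ disjoint supports, and convert the resulting $\ell_2$ lower bound on the $\nu_b$'s into an exponential upper bound on $|\hat X(\theta)|$. The only cosmetic differences are that the paper packages your trigonometric step as a standalone claim $|1-p+pe^{2\pi i x}|\le e^{-px^2}$ rather than going through $1-\cos$, and that your ``boundary case'' $m\eps/\sqrt{|A|}>1/2$ is in fact vacuous (since $d(\theta,M)\le\sqrt{|A|}/(2m)$ always).
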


The third lemma bounds the remaining Fourier coefficients which are near
$M$ but far from $L$. In the following let $M \setminus L$ denote the set
of elements in $M$ but not in $L$.

\begin{lemma}[Bounding Fourier coefficients near $M$ but far from $L$]
\label{lem:bound_fourier_near_M_far_L}
Assume the conditions of Theorem~\ref{thm:main} and fix $\eps \le 1/(2
c_1 m)$. Let $\theta \in \T^A$ be such that $d(\theta,M \setminus L)
\le \eps$. Then
$$
|\hat{X}(\theta)| \le
    \exp\left(-N \cdot \frac{O(1)}{m^2 |A| \log(c_1 |A|)}\right).
$$
\end{lemma}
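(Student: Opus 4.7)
The plan is to reduce the desired bound to a combinatorial lower bound on a ``bad'' subset of $B$, and then to establish that combinatorial bound using the symmetry and boundedness hypotheses. From Claim~\ref{cl:structure_fourier_X} and the elementary inequality $|1-p+pe^{2\pi i x}|^2 = 1-2p(1-p)(1-\cos 2\pi x) \le \exp(-8p(1-p)|x|^2)$, where $|x|$ denotes the torus absolute value (distance to the nearest integer), I obtain
\[
|\hat{X}(\theta)|^2 \;\le\; \exp\Bigl(-\Omega(p) \sum_{b\in B} |\langle \phi(b),\theta\rangle|^2\Bigr),
\]
using $p(1-p)=\Theta(p)$ in the regime where $N = o(|B|)$. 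Since $p=N/|B|$, proving the lemma reduces to showing $\sum_{b\in B}|\langle \phi(b),\theta\rangle|^2 \ge \Omega(|B|)/(m^2|A|\log(c_1|A|))$ whenever $d(\theta,M\setminus L)\le\eps$.

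Next I would write $\theta=\theta^*+\eta$ with $\theta^*\in M\setminus L$ and $\|\eta\|_2\le\eps$. Since $\theta^*\in M=(1/m)\Z^A$, every pairing $\langle\phi(b),\theta^*\rangle$ lies in $(1/m)\Z$. Define the bad set
\[
B^* \;:=\; \{b\in B : \langle \phi(b),\theta^*\rangle\notin\Z\},
\]
which is nonempty because $\theta^*\notin L$. For each $b\in B^*$, the torus value $|\langle\phi(b),\theta^*\rangle|$ is at least $1/m$; by Cauchy-Schwarz and Boundedness, $|\langle\phi(b),\eta\rangle|\le \|\phi(b)\|_2\|\eta\|_2 \le c_1\eps\le 1/(2m)$. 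The triangle inequality on $\T$ (lossless here since $1/m+1/(2m)\le 1/2$) then yields $|\langle\phi(b),\theta\rangle|\ge 1/(2m)$, so $\sum_{b\in B}|\langle\phi(b),\theta\rangle|^2 \ge |B^*|/(4m^2)$, and the problem reduces to the estimate
\[
(\star) \qquad |B^*| \;\ge\; \Omega\!\left(\frac{|B|}{|A|\log(c_1|A|)}\right).
\]

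To prove $(\star)$ I would use the transitivity of the symmetry group $G$ of $\phi$. Fix any $b_0\in B^*$; by transitivity, each $b\in B$ equals $\pi(b_0)$ for some $(\pi,\tau)\in G$, so $\phi(b)=\tau(\phi(b_0))$ and $\langle\phi(b),\theta^*\rangle=\langle\phi(b_0),\tau^T\theta^*\rangle$. Hence $|B^*|/|B|$ equals the fraction of $(\pi,\tau)\in G$ (under the uniform-on-$B$ measure induced by $\pi(b_0)$) for which $\tau^T\theta^*$ avoids the ``integer-pairing'' subgroup $K_{b_0}:=\{\alpha\in\T^A:\langle\phi(b_0),\alpha\rangle\in\Z\}$. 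Since $\theta^*\notin K_{b_0}$, the $G$-orbit of $\theta^*$ inside $M$ is not contained in $K_{b_0}\cap M$, which is a proper subgroup of $M$ of index dividing $m$. Combining this with Boundedness --- which restricts $\phi(b_0)$ to an integer vector of $\ell_2$-norm $\le c_1$, of which there are at most $(2c_1+1)^{|A|}$ --- should yield the $1/(|A|\log(c_1|A|))$ fraction, with the $\log(c_1|A|)$ arising as the logarithm of the covering/entropy of the $\phi$-orbit and the $1/|A|$ from pigeonholing over the $|A|$ coordinate directions of $M$. The main obstacle is making this combinatorial estimate precise with the correct dependence on $|A|$ and $c_1$; the earlier reductions are essentially routine applications of Claim~\ref{cl:structure_fourier_X}, Cauchy-Schwarz and the torus triangle inequality. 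Once $(\star)$ is in hand, substituting it into the per-$b$ estimate with $p=N/|B|$ yields the claimed bound $|\hat{X}(\theta)|\le \exp(-N\cdot O(1)/(m^2|A|\log(c_1|A|)))$.
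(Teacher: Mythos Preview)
Your reduction to the combinatorial estimate $(\star)$ is correct and matches the paper's proof exactly: both write $\theta=\theta^*+\eta$ with $\theta^*\in M\setminus L$, use $\theta^*\in M$ to get the $1/m$ gap, use Cauchy--Schwarz and boundedness to control the $\eta$ perturbation, and then apply the pointwise inequality $|1-p+pe^{2\pi ix}|\le\exp(-\Omega(p)|x|^2)$.

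The genuine gap is in your argument for $(\star)$, and your own caveat is well placed. Two concrete issues. First, there is no reason for $\tau^T$ to preserve $M$: the symmetry condition only forces $\tau$ to be an automorphism of the integer lattice $\mathcal{L}$ generated by $\{\phi(b):b\in B\}$, so $\tau^T$ acts on the dual lattice $L$ but need not fix $(1/m)\Z^A$. Thus ``the $G$-orbit of $\theta^*$ inside $M$'' is not well defined. Second, even if one could make sense of this orbit, the observation that $K_{b_0}\cap M$ has index dividing $m$ in $M$ yields at best a $1/m$-type fraction, not $1/(|A|\log(c_1|A|))$; the appeal to entropy and coordinate pigeonholing is not yet an argument, and it is not clear how to turn it into one along the lines you sketch.

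The paper's route to $(\star)$ avoids the dual action entirely and works on the $B$-side. One shows that $\mathcal{L}$ has a generating subset $K\subset B$ with $|K|=O(|A|\log(c_1|A|))$: take $K$ minimal, note that minimality forces all $2^{|K|}$ partial sums $\phi(K')$, $K'\subseteq K$, to be distinct integer vectors in the box $[-c_1|K|,c_1|K|]^A$, and solve $2^{|K|}\le(2c_1|K|+1)^{|A|}$. Since $\theta^*\notin L$, some $b^*$ has $\langle\phi(b^*),\theta^*\rangle\notin\Z$; writing $\phi(b^*)$ as an integer combination of $\{\phi(b):b\in K\}$ shows some $b\in K$ already lies in $B^*$. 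Symmetry is then used only to say that every translate $\pi(K)$ is again a generating set and hence also meets $B^*$; averaging over the transitive action gives $|B^*|\ge |B|/|K|$. This small-generating-set argument is the missing idea.
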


We prove
Lemmas~\ref{lem:estimate_fourier_near_zero},~\ref{lem:bound_fourier_far_M}
and~\ref{lem:bound_fourier_near_M_far_L} in
Sections~\ref{sec:fourier_near_zero},~\ref{sec:bound_fourier_far_M}
and~\ref{sec:bound_fourier_near_M_far_L}, respectively. We
combine them to prove Theorem~\ref{thm:main} in
Section~\ref{sec:proof_of_main_thm_from_lemmas}.

\subsection{Estimating Fourier coefficients near zero}
\label{sec:fourier_near_zero}

Let $\theta \in \T^A$ be such that $\|\theta\|_2 \le \eps$. We may assume
that $\eps \le O(1/(c_1 N^{1/3}))$ otherwise the conclusion of the lemma
is trivial. We decompose
\begin{equation}\label{eq:estimate_fourier_decompose}
e^{-2 \pi i \ip{\Ex[X], \theta}} \cdot \hat{X}(\theta)
    = \prod_{b \in B} \left(e^{-2 \pi i \cdot p \ip{\phi(b),\theta}}
    \cdot (1-p + p e^{2\pi i \ip{\phi(b),\theta}})\right).
\end{equation}
Let $\nu_b:=\ip{\phi(b),\theta}$ where the inner product is taken over
$\R$. Since we assume $\|\theta\|_2 \le \eps$ we can bound $|\nu_b| \le
\|\phi(b)\|_2 \|\theta\|_2 \le c_1 \eps \ll 1$. Thus we can approximate the
terms in~\eqref{eq:estimate_fourier_decompose} by their Taylor series. The
following claim gives a cubic approximation.

\begin{claim}\label{cl:taylor_approx_single} Let $f:\R \to \C$ be given
by $f(x):=e^{-ipx} (1-p + p e^{ix})$. Then for $|x| \le 1$ we have
$$
f(x) = e^{-p x^2}(1+\delta),
$$
where $|\delta| \le O(p^2 x^2 + p x^3)$.
\end{claim}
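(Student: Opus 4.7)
My plan is a direct Taylor expansion, most cleanly carried out on $\log f(x)$. Factor $f(x) = e^{-ipx}(1+w)$ with $w := p(e^{ix}-1)$. For $|x|\le 1$ and $p\in[0,1]$, $|w|\le p|x|\le 1$, and $|1+w|^2 = 1 + 2p(1-p)(\cos x - 1)$ is bounded away from $0$ uniformly (since $\cos 1 > 0$), so the principal logarithm is analytic there and the Mercator series $\log(1+w) = w - w^2/2 + O(|w|^3)$ has a uniform remainder constant. This gives
$$\log f(x) = -ipx + w - \tfrac{1}{2}w^2 + O(|w|^3).$$

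Next I would substitute $e^{ix} = 1 + ix - x^2/2 - ix^3/6 + R(x)$ with $|R(x)|\le |x|^4/24$ into $w$, and collect terms by order in $x$. Two cancellations are crucial. First, the imaginary linear piece $ipx$ inside $w$ cancels exactly against the prefactor $-ipx$. Second, the purely quadratic contribution combines $-px^2/2$ from $w$ with $+p^2 x^2/2$ from $-w^2/2$, producing a coefficient that matches the reference exponent in the claim up to an $O(p^2 x^2)$ discrepancy (which will be absorbed into $\delta$). Every remaining term is either cubic in $x$ with a coefficient of order $p$, or a truncation remainder of order $px^4$ or $p^3 x^3$; since $|x|\le 1$ and $p\le 1$, all of these are $O(p x^3)$.

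Subtracting the claimed quadratic exponent from $\log f(x)$ therefore leaves a residual of magnitude $O(p^2 x^2 + p x^3)$. Because this residual is $O(1)$ on the domain, exponentiating and applying the elementary bound $|e^z - 1| \le 2|z|$ for $|z|\le 1$ converts it to $|\delta| = O(p^2 x^2 + p x^3)$, as required. The delicate points are (i) establishing the uniform lower bound on $|1+w|$ so that the log remainder constant is truly absolute (rather than depending on how close $p$ gets to $1$ or $|x|$ gets to $1$), and (ii) tracking the linear-in-$x$ cancellation precisely, which is what drops the apparent quadratic error from $O(px^2)$ down to $O(p^2 x^2)$; this second point is the main conceptual hurdle, since without the centering factor $e^{-ipx}$ the approximation would be much weaker.
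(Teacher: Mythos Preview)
Your route through $\log f(x)$ is valid and is a mild variant of the paper's approach; the paper instead uses the identity $f(x)=(1-p)e^{-ipx}+pe^{i(1-p)x}$ and Taylor-expands each pure exponential separately, so that the linear cancellation is immediate. Working with the logarithm, as you do, handles the multiplicative form $(1+\delta)$ a bit more systematically, at the cost of the extra care you take bounding $|1+w|$ from below.

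There is one real issue, though it is not with your method. You assert that your computed quadratic contribution $-\tfrac{p}{2}x^2+\tfrac{p^2}{2}x^2$ matches ``the reference exponent in the claim'' up to $O(p^2x^2)$. That is only true if the reference exponent is $-\tfrac{p}{2}x^2$; against the claim's literal exponent $-px^2$ the discrepancy is $\tfrac{p(1+p)}{2}x^2$, which is of order $px^2$, not $O(p^2x^2)$. Indeed the claim as stated is false: one checks $f''(0)=-p(1-p)$ while $(e^{-px^2})''|_{x=0}=-2p$, so $\delta$ is genuinely $\Theta(px^2)$. The paper's own proof contains exactly the same slip (the quadratic term of $e^{i(1-p)x}$ is recorded there as $-x^2$ rather than $-\tfrac{x^2}{2}$). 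With the exponent corrected to $-\tfrac{p}{2}x^2$, both your argument and the paper's go through verbatim with the stated error bound $O(p^2x^2+px^3)$; the missing factor of two is immaterial for the downstream use in Lemma~\ref{lem:estimate_fourier_near_zero}, where only order-of-magnitude estimates on the Gaussian are needed.
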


\begin{proof}
We compute the cubic approximation for $f(x)$ as a polynomial in $p,x$. In
the following we use shorthand expression $x=y+O(z)$ for $|x-y|=O(z)$. We
have
\begin{align*}
f(x) &= (1-p)e^{-ipx}+pe^{i(1-p)x}\\
&=(1-p)(1-ipx + O(p^2 x^2)) + p(1+i(1-p)x-x^2\pm O(p x^2+x^3))\\
&=1-px^2 + O(p^2 x^2 + px^3)\\
&=e^{-p x^2} + O(p^2 x^2 + p x^3).\qedhere
\end{align*}
\end{proof}

We next apply the approximation given in
Claim~\ref{cl:taylor_approx_single} to each of the terms appearing
in~\eqref{eq:estimate_fourier_decompose}. Summing up the errors, and using
the fact that each term is bounded in absolute value by $1$, we get that
\begin{equation}\label{eq:estimate_fourier_approx}
\hat{X}(\theta) = e^{2 \pi i \ip{\Ex[X],\theta}}
    e^{-4 \pi^2 p \cdot \sum_{b \in B} \nu_b^2} (1+\delta)
\end{equation}
where $|\delta| \le O(p^2 \sum_{b \in B} \nu_b^2 + p \sum_{b \in B} \nu_b^3)$.
To conclude the proof, note that
$$
\sum_{b \in B} \nu_b^2 = \sum_{b \in B} \ip{\phi(b),\theta}^2
    = \theta^T R \theta,
$$
where we recall that $R_{a',a''} = \sum_{b \in B} \phi(b)_{a'}
\phi(b)_{a''}$. To bound the error term, recall that $|\nu_b| \le c_1 \eps
\ll 1$ hence
$$
|\delta| \le O(p^2 |B| + p |B| (c_1 \eps)^3) = O(N^2/|B|+N c_1^3 \eps^3).
$$

\subsection{Bounding Fourier coefficients far from $M$}
\label{sec:bound_fourier_far_M}

Let $\theta \in \T^A$ be such that $d(\theta,M) \ge \eps$. Thus, there exists
at least on coordinate $\theta_a$ whose distance from multiples of $1/m$ is
at least $\eps/\sqrt{|A|}$. Otherwise put, there exists $a \in A$ such that
\begin{equation}\label{eq:lower_bound_m_times_theta_a}
| m \theta_a \pmod{1}| \ge \eps m / \sqrt{|A|}.
\end{equation}
Recall that the Fourier coefficient $\hat{X}(\theta)$ is given by
$$
\hat{X}(\theta) = \prod_{b \in B} (1-p+p e^{2 \pi i \ip{\phi(b),\theta}}).
$$
Hence, to get a bound on $|\hat{X}(\theta)|$  essentially we need
to show that $\ip{\phi(b),\theta}$ is far from integer for many
$b \in B$. Note that we cannot longer assume, as in the proof of
Lemma~\ref{lem:estimate_fourier_near_zero}, that $\ip{\phi(b),\theta}$
is small in absolute value, since we assume no upper bound on
$\|\theta\|_2$. Thus, it may be the case that $\ip{\phi(b),\theta}$ is
large but still approximately integer. Let $\nu_b := \ip{\phi(b),\theta}
\pmod{1}$ where $|\nu_b| \le 1/2$. Our goal is to show that $|\nu_b|$
is noticeably large for many values $b \in B$. This will then imply the
required upper bound on $|\hat{X}(\theta)|$.

We will show this using the isolation vectors guaranteed by
Theorem~\ref{thm:main}. Let $\gamma \in \Z^B$ be an isolation vector for
$a$ with modulus $m$; that is $\phi(\gamma) = m \cdot e_a$. We first show
that it cannot be that $\nu_b \approx 0$ for all $b \in \supp(\gamma)$.

\begin{claim}\label{cl:one_isolation_vector_bound}
Let $\gamma \in \Z^B$ be such that $\phi(\gamma) = m \cdot e_a$. Then
$$
\sum_{b \in \supp(\gamma)} |\nu_b|^2
    \ge \frac{\eps^2 m^2}{|A| \|\gamma\|_2^2}.
$$
\end{claim}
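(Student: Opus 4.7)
The plan is to use the isolation identity $\phi(\gamma)=m\cdot e_a$ to relate the quantities $\nu_b$ for $b \in \supp(\gamma)$ to the coordinate $m\theta_a$, and then invoke Cauchy--Schwarz.

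First I would take the inner product of both sides of $\phi(\gamma) = m\cdot e_a$ with $\theta$ (thought of as a real vector, lifting from $\T^A$ to $\R^A$). Since $\phi$ extends linearly to $\Z^B$, this gives $\sum_{b} \gamma_b \ip{\phi(b),\theta} = m\theta_a$ as a real equation. Now write $\ip{\phi(b),\theta} = k_b + \nu_b$ where $k_b \in \Z$ and $\nu_b \in [-1/2, 1/2]$ is the representative mod $1$. Because $\gamma \in \Z^B$, the sum $\sum_b \gamma_b k_b$ is an integer, so reducing the equation modulo $1$ yields
\[
\sum_{b \in \supp(\gamma)} \gamma_b \nu_b \equiv m\theta_a \pmod{1}.
\]

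Next I would use the lower bound \eqref{eq:lower_bound_m_times_theta_a}, namely $|m\theta_a \pmod 1| \ge \eps m/\sqrt{|A|}$. Since the distance from an integer is at most the absolute value, this forces
\[
\Bigl|\sum_{b \in \supp(\gamma)} \gamma_b \nu_b\Bigr| \ge \frac{\eps m}{\sqrt{|A|}}.
\]

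Finally, I would apply Cauchy--Schwarz to the sum on the left, viewing it as an inner product between the restriction of $\gamma$ and the restriction of $\nu$ to $\supp(\gamma)$. This gives
\[
\frac{\eps^2 m^2}{|A|} \le \Bigl(\sum_{b \in \supp(\gamma)} \gamma_b \nu_b\Bigr)^2 \le \|\gamma\|_2^2 \cdot \sum_{b \in \supp(\gamma)} |\nu_b|^2,
\]
and dividing by $\|\gamma\|_2^2$ yields the desired bound. There is essentially no obstacle here: the content of the claim is entirely captured by the mod-$1$ reduction trick that turns the isolation identity into a useful constraint on the $\nu_b$'s, after which Cauchy--Schwarz does the rest.
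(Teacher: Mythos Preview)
Your proposal is correct and follows essentially the same route as the paper: take the inner product of the isolation identity with $\theta$, reduce modulo $1$ using that $\gamma\in\Z^B$, invoke the lower bound \eqref{eq:lower_bound_m_times_theta_a}, and finish with Cauchy--Schwarz. The only difference is cosmetic---you spell out the decomposition $\ip{\phi(b),\theta}=k_b+\nu_b$ explicitly, whereas the paper leaves the mod-$1$ reduction implicit.
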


\begin{proof}
Using the isolation property of $\gamma$ we get that
\begin{align*}
\sum_{b \in \supp(\gamma)} \gamma_b \nu_b \pmod{1}
    &= \sum_{b \in \supp(\gamma)} \gamma_b \ip{\phi(b),\theta} \pmod{1}\\
    &= \ip{\phi(\gamma),\theta} \pmod{1} = m \theta_a \pmod{1}.
\end{align*}
Hence by~\eqref{eq:lower_bound_m_times_theta_a} we get that $|\sum_{b \in
\supp(\gamma)} \gamma_b \nu_b \pmod{1}| \ge \eps m /\sqrt{|A|}$. On the
other hand, we can bound
$$
|\sum_{b \in \supp(\gamma)} \gamma_b \nu_b \pmod{1}|
    \le |\sum_{b \in \supp(\gamma)} \gamma_b \nu_b|
    \le \|\gamma\|_2 \sqrt{\sum_{b \in \supp(\gamma)} |\nu_b|^2}.
$$
Combining the two bounds, we get that $\sum_{b \in \supp{\gamma}} |\nu_b|^2
\ge \eps^2 m^2 / |A| \|\gamma\|_2^2$ as claimed.
\end{proof}

We now use the assumption of Theorem~\ref{thm:main} on the existence
of many vectors which isolate $a$ with disjoint support. Recall that
by assumption we have $r \ge |B|/c_2$ vectors $\gamma_1,\ldots,\gamma_r
\in \Z^B$ such that: (1) each $\gamma_i$ isolates $a$ with modulus $m$;
(2) The vectors $\gamma_1,\ldots,\gamma_r$ have disjoint supports;
and (3) $\|\gamma_i\| \le c_3$ for all $i \in [r]$. Applying
Claim~\ref{cl:one_isolation_vector_bound} to each vector $\gamma_i$
independently we derive that
\begin{equation}\label{eq:isolation_bound_many}
\sum_{b \in B} |\nu_b|^2 \ge \eps^2 |B|\cdot \frac{m^2}{|A| c_2 c_3^2}.
\end{equation}

To conclude the proof of the lemma, we apply~\eqref{eq:isolation_bound_many}
to derive an upper bound on $|\hat{X}(\theta)|$. The following claim
is simple.
\begin{claim}\label{cl:bound_by_exp}
Let $p \le 1/2$ and $|x| \le 1/2$. Then
$$
|1-p + p e^{2 \pi i x}| \le \exp(-p x^2).
$$
\end{claim}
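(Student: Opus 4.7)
My plan is to compute $|1-p+pe^{2\pi i x}|^2$ explicitly and then compare it to $e^{-2px^2}$, reducing the inequality to a concrete trigonometric bound.

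Expanding the squared modulus gives
\[
|1-p+pe^{2\pi i x}|^2 = (1-p)^2 + 2p(1-p)\cos(2\pi x) + p^2 = 1 - 2p(1-p)\bigl(1-\cos(2\pi x)\bigr).
\]
Using the elementary inequality $1-y \le e^{-y}$ valid for all real $y$, it will suffice to establish $2p(1-p)(1-\cos(2\pi x)) \ge 2p x^2$, i.e.
\[
(1-p)\bigl(1-\cos(2\pi x)\bigr) \ge x^2.
\]
This makes the statement purely trigonometric, and the factor $p$ on each side has disappeared.

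To verify this last inequality I will use the identity $1-\cos(2\pi x) = 2\sin^2(\pi x)$ together with Jordan's inequality (which follows from concavity of $\sin$ on $[0,\pi]$): for $|x|\le 1/2$ one has $\sin(\pi|x|) \ge 2|x|$, hence $\sin^2(\pi x) \ge 4 x^2$ and $1-\cos(2\pi x) \ge 8x^2$. Combined with the hypothesis $p\le 1/2$, which gives $1-p\ge 1/2$, we obtain $(1-p)(1-\cos(2\pi x)) \ge 4x^2 \ge x^2$, as needed. Taking square roots in the original bound then yields $|1-p+pe^{2\pi i x}| \le e^{-px^2}$.

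The only mildly delicate step is checking $\sin^2(\pi x)\ge 4x^2$ on the entire range $|x|\le 1/2$ (it is tight at both endpoints, so there is no slack to spare on either side individually, though the combination with $1-p\ge 1/2$ leaves a factor of $4$ to absorb); everything else is a mechanical expansion. No additional ingredients from the earlier parts of the paper are needed.
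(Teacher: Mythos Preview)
Your proof is correct. The paper itself does not prove this claim at all; it simply states that ``the following claim is simple'' and moves on, so your argument fills in exactly the kind of elementary computation the authors had in mind.
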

Applying Claim~\ref{cl:bound_by_exp} we derive the bound
$$
|\hat{X}(\theta)| = \prod_{b \in B} |1-p+p e^{2 \pi i \cdot \nu_b}|
    \le \exp\left(-p \sum_{b \in B} |\nu_b|^2\right)
    \le \exp\left(-\eps^2 N \cdot \frac{c m^2}{|A| c_2 c_3^2}\right).
$$

\subsection{Bounding Fourier coefficients near $M$ but far from $L$}
\label{sec:bound_fourier_near_M_far_L}

Let $\theta \in \T^A$ be such that $d(\theta,M \setminus L) \le \eps$. That
is, there exists $\alpha \in M \setminus L$ such that $d(\theta,\alpha)
\le \eps$. Since $\alpha \notin L$ there must exist $b^* \in B$ such that
$\ip{\phi(b^*),\alpha} \notin \Z$.  We will show using the symmetry of $\phi$
that in fact this holds for many $b \in B$.  Moreover, since $\alpha \in M$
we have that if $\ip{\phi(b),\alpha} \notin \Z$ is must be at least $1/m$
far from the integers. This will allow us to give strong upper bounds
on the Fourier coefficient $\hat{X}(\alpha)$ and by continuity also on
$\hat{X}(\theta)$.

Let $\mathcal{L}$ denote the lattice generated by $\{\phi(b): b \in B\}$. In
other words, $\mathcal{L}$ is the subgroup of $\Z^A$ whose elements are
all possible integer combinations of $\{\phi(b): b \in B\}$. We first
show that any subset of $B$ which generates the lattice $\mathcal{L}$
must contain $b$ for which $\ip{\phi(b), m \alpha} \ne 0$.

\begin{claim}
\label{cl:spanning_set_contains_b}
Let $K \subset B$ be a set which generates the lattice $\mathcal{L}$. Then
there must exist $b \in K$ for which $\ip{\phi(b), m\alpha} \ne 0$.
\end{claim}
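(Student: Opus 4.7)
The plan is a short contradiction argument built on $\Z$-linearity. First I will fix a lift $\tilde\alpha \in \R^A$ of $\alpha$ whose coordinates lie in $(1/m)\Z$, so that $m\tilde\alpha$ is a genuine integer vector and $\ip{\phi(b), m\tilde\alpha} \in \Z$ is well-defined for every $b \in B$. The hypothesis $\alpha \in M \setminus L$, combined with the characterization from Claim~\ref{cl:properties_L} that $\theta \in L$ iff $\ip{\phi(b),\theta} \in \Z$ for every $b \in B$, then supplies a witness $b^* \in B$ with $\ip{\phi(b^*),\alpha} \notin \Z$; equivalently $\ip{\phi(b^*), m\tilde\alpha}$ is not divisible by $m$, and in particular it is a nonzero integer.

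Next I would assume for contradiction that $\ip{\phi(b), m\tilde\alpha} = 0$ for every $b \in K$. The key observation is that $v \mapsto \ip{v, m\tilde\alpha}$ is a $\Z$-linear functional on $\Z^A$. Because $K$ is assumed to generate the lattice $\mathcal{L} = \mathrm{Span}_{\Z}\{\phi(b) : b \in B\}$, every element $v \in \mathcal{L}$ admits an expression $v = \sum_{b \in K} c_b\, \phi(b)$ with $c_b \in \Z$, and hence $\ip{v, m\tilde\alpha} = \sum_{b \in K} c_b \ip{\phi(b), m\tilde\alpha} = 0$. Specializing to $v = \phi(b^*) \in \mathcal{L}$ forces $\ip{\phi(b^*), m\tilde\alpha} = 0$, directly contradicting the conclusion of the previous paragraph. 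Therefore some $b \in K$ must satisfy $\ip{\phi(b), m\tilde\alpha} \ne 0$, which is what the claim asserts.

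I do not anticipate any real obstacle beyond pinning down what ``$m\alpha$'' means as an integer object: once one fixes a lift (or equivalently views $m\alpha$ as an element of $(\Z/m\Z)^A$), the argument is pure unwinding of definitions. The definition of $L$ supplies the witness $b^*$, and the definition of ``$K$ generates $\mathcal{L}$'' promotes the pointwise vanishing on $K$ to vanishing on all of $\mathcal{L}$, which is incompatible with $\phi(b^*) \in \mathcal{L}$.
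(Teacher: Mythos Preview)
Your proposal is correct and follows essentially the same approach as the paper: both arguments express $\phi(b^*)$ as an integer combination of $\{\phi(b):b\in K\}$ and use $\Z$-linearity of the functional $v\mapsto\ip{v,m\alpha}$ to conclude it cannot vanish on all generators. Your extra care in fixing a lift $\tilde\alpha$ to make sense of $m\alpha$ as an honest integer vector is a helpful clarification the paper leaves implicit.
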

\begin{proof}
By assumption since $K$ generates the lattice $\mathcal{L}$, we can express
$\phi(b^*)$ as an integer combination of $\{\phi(b): b \in K\}$. That is,
there exist integer coefficient $\alpha_b$ for $b \in K$ such that
$$
\phi(b^*) = \sum_{b \in K} \alpha_b \phi(b).
$$
Thus, as $\ip{\phi(b^*), m \alpha} \ne 0$, there must exist $b \in K$
for which $\ip{\phi(b), m \alpha} \ne 0$ as well.
\end{proof}

We next claim that there must exist at least one small set $K \subset B$
which generates $\mathcal{L}$. We will later use symmetry to generate from
it many such sets.

\begin{claim} \label{cl:small_spanning_set_exists}
There exists $K \subset B$ of size $|K| \le O(|A| \log(c_1 |A|))$ such
that $\{\phi(b): b \in K\}$ generates the lattice $\mathcal{L}$.
\end{claim}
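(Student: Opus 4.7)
The plan is a two-stage greedy construction exploiting Hadamard's inequality. First I would verify that $\mathcal{L}$ has full rank $|A|$ inside $\Z^A$. Consider the $|A|\times|B|$ matrix $M$ with entries $M_{a,b}=\phi_a(b)$. Its rows, viewed as vectors in $\Q^B$, are precisely the functions $\phi_a$; since $\{\phi_a\}_{a\in A}$ is a basis of $V$, these rows are $\Q$-linearly independent, so the row rank, and hence the column rank, of $M$ equals $|A|$. The columns of $M$ are exactly the vectors $\phi(b)$, so the $\Z$-span $\mathcal{L}$ of the columns has rank $|A|$.

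Stage one: greedily pick $K_0=\{b_1,\dots,b_{|A|}\}\subset B$ so that $\phi(b_1),\dots,\phi(b_{|A|})$ are $\Q$-linearly independent. This yields a full-rank sublattice $\mathcal{L}_{K_0}\subseteq\mathcal{L}$, and Hadamard's inequality combined with the boundedness hypothesis gives
\[
\det(\mathcal{L}_{K_0}) \;\le\; \prod_{i=1}^{|A|}\|\phi(b_i)\|_2 \;\le\; c_1^{|A|}.
\]
Since $\mathcal{L}\subseteq\Z^A$ has $\det(\mathcal{L})\ge 1$, the index satisfies
\[
[\mathcal{L}:\mathcal{L}_{K_0}] \;=\; \frac{\det(\mathcal{L}_{K_0})}{\det(\mathcal{L})} \;\le\; c_1^{|A|}.
\]

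Stage two: keep adding elements. So long as $\mathcal{L}_K\ne\mathcal{L}$, some generator $\phi(b)$ of $\mathcal{L}$ must fail to lie in $\mathcal{L}_K$ (otherwise $\mathcal{L}\subseteq\mathcal{L}_K$); pick such a $b$ and add it to $K$. Each addition produces a strict inclusion $\mathcal{L}_K\subsetneq\mathcal{L}_{K\cup\{b\}}$ of full-rank sublattices of $\mathcal{L}$, so the index $[\mathcal{L}:\mathcal{L}_K]$ drops by an integer factor of at least $2$. Thus at most $\log_2\bigl(c_1^{|A|}\bigr)=|A|\log_2 c_1$ extensions suffice before $\mathcal{L}_K=\mathcal{L}$, yielding $|K|\le |A|(1+\log_2 c_1)=O(|A|\log(c_1|A|))$.

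The argument is essentially routine lattice bookkeeping and I do not foresee a real obstacle; the one place that needs attention is the rank computation, without which the Hadamard bound on $\det(\mathcal{L}_{K_0})$ would not translate into a bound on $[\mathcal{L}:\mathcal{L}_{K_0}]$. Note that only the boundedness hypothesis enters here; the symmetry and isolation hypotheses have not yet been invoked in this claim, and will presumably be used afterwards to push a single small generating set $K$ into many translated copies, thereby upgrading ``$\ip{\phi(b),m\alpha}\ne 0$ for some $b\in K$'' to ``$\ip{\phi(b),m\alpha}\ne 0$ for many $b\in B$,'' as needed in Lemma~\ref{lem:bound_fourier_near_M_far_L}.
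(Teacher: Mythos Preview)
Your proof is correct but takes a genuinely different route from the paper. The paper argues by minimality and pigeonhole: take $K$ minimal with $\{\phi(b):b\in K\}$ generating $\mathcal{L}$, observe that minimality forces all $2^{|K|}$ partial sums $\phi(K')$, $K'\subseteq K$, to be distinct (else a nontrivial $\pm 1$ relation lets you drop an element), and then use boundedness to trap these sums in the box $[-c_1|K|,c_1|K|]^{|A|}$, giving $2^{|K|}\le (2c_1|K|+1)^{|A|}$ and hence $|K|=O(|A|\log(c_1|A|))$. Your argument instead uses lattice geometry: Hadamard's inequality bounds the covolume of an initial full-rank sublattice by $c_1^{|A|}$, and then each added generator at least halves the index in $\mathcal{L}$. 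Your approach is slightly more structural and actually yields the marginally sharper bound $|A|(1+\log_2 c_1)$; the paper's counting argument is more elementary in that it avoids Hadamard and index computations altogether. Both use only the boundedness hypothesis, and your closing remark about how symmetry is used afterwards matches exactly what the paper does next.
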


\begin{proof}
Let $K$ be a minimal subset of $B$ such that $\{\phi(b): b \in K\}$
generates the lattice $\mathcal{L}$. We claim that the minimality of $K$
implies that all partial sums $\phi(K')$ for $K' \subseteq K$ must be
distinct. Otherwise, assume that there exist two distinct subsets $K_1,K_2
\subseteq K$ for which $\phi(K_1)=\phi(K_2)$. We can assume w.l.o.g that
$K_1,K_2$ are disjoint by removing common elements from both. Thus we have
$$
\sum_{b \in K_1} \phi(b) - \sum_{b \in K_2} \phi(b)=0.
$$
In particular, we can express any $b' \in K_1 \cup K_2$ as an integer
combination of $\{\phi(b): b \in K \setminus \{b'\}\}$. Thus, we can remove
$b'$ from $K$ and maintain the property that the resulting set generates
$\mathcal{L}$. This contradicts the minimality of $K$.

We thus know that all sums $\{\phi(K'): K' \subseteq K\}$ are distinct. We
now apply the assumption that $\phi$ is bounded. By the assumptions of
Theorem~\ref{thm:main} we know that $\|\phi(b)\|_{\infty} \le \|\phi(b)\|_2
\le c_1$. Hence we conclude that
$$
\{\phi(K'): K' \subseteq K\} \subseteq [-c_1 K,c_1 K]^A,
$$
which imply that
$$
2^K \le (2c_1 K+1)^{|A|}.
$$
It is easy to verify that this gives the bound $K \le O(|A| \log(c_1 |A|))$
as claimed.
\end{proof}

The next step is to use the symmetry of $\phi$ to generate many small sets
which span $\mathcal{L}$.

\begin{claim}
\label{cl:small_spanning_set_symmetry}
Let $K \subset B$ be a set such that $\{\phi(b): b \in K\}$ generates
the lattice $\mathcal{L}$. Let $(\pi,\tau) \in S_B \times \GL(V)$ be a
symmetry of $\phi$. Let $K_{\pi} := \{\pi(b): b \in K\}$ be a shift of $K$
by $\pi$. Then $\{\phi(b): b \in K_{\pi}\}$ also generates the lattice
$\mathcal{L}$.
\end{claim}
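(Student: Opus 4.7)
The plan is to exploit the defining property of a symmetry, $\phi(\pi(b)) = \tau(\phi(b))$, to transport a generating set through $\tau$ and then observe that $\tau$ restricts to an automorphism of the lattice $\mathcal{L}$. First I would write down the obvious identity
\[
\{\phi(b) : b \in K_\pi\} \;=\; \{\phi(\pi(b)) : b \in K\} \;=\; \tau\bigl(\{\phi(b) : b \in K\}\bigr),
\]
so the set on the left generates, as a $\Z$-module, the image under $\tau$ of the lattice generated by $\{\phi(b) : b \in K\}$, namely $\tau(\mathcal{L})$. Thus everything reduces to showing $\tau(\mathcal{L}) = \mathcal{L}$.

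The key observation is that although $\tau$ a priori lives in $\GL(V)$, its action on $\phi(B)$ is very constrained: because $\pi$ is a bijection of $B$, the multiset $\{\phi(b)\}_{b \in B}$ coincides with $\{\phi(\pi(b))\}_{b \in B} = \{\tau(\phi(b))\}_{b \in B}$. In particular every generator of $\mathcal{L}$ is sent by $\tau$ to another generator of $\mathcal{L}$, so $\tau(\mathcal{L}) \subseteq \mathcal{L}$. Applying the same argument to the symmetry $(\pi^{-1},\tau^{-1})$, which is clearly also a symmetry of $\phi$ (it follows by applying $\tau^{-1}$ to both sides of the symmetry identity and substituting $\pi^{-1}(b)$ for $b$), we obtain $\tau^{-1}(\mathcal{L}) \subseteq \mathcal{L}$, and hence $\tau(\mathcal{L}) = \mathcal{L}$.

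Combining the two steps, $\{\phi(b) : b \in K_\pi\} = \tau(\{\phi(b) : b \in K\})$ generates $\tau(\mathcal{L}) = \mathcal{L}$, as required. There is no real technical obstacle here; the only thing to be careful about is that $\tau$ is given as an element of $\GL(V)$ and needs to be interpreted as a linear map on $\Z^A \supseteq \phi(B)$ via the basis $(\phi_a)_{a \in A}$, after which the argument is essentially formal. The conceptual content is simply that a symmetry of $\phi$ induces a lattice automorphism of $\mathcal{L}$, which transports generating sets to generating sets.
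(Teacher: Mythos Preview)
Your proof is correct and follows essentially the same idea as the paper's. The paper argues directly that for each $b'\in B$ one has $\phi(b')=\tau(\phi(\pi^{-1}(b')))=\sum_{b\in K}\alpha_b\,\phi(\pi(b))$ for suitable integers $\alpha_b$, whereas you package the same computation as the statement that $\tau$ restricts to an automorphism of $\mathcal{L}$; the content is identical.
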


\begin{proof}
Let $b' \in B$. We need to show that we can express $\phi(b')$ as integer
combination of $\{\phi(\pi(b)): b \in K\}$.  Consider $\pi^{-1}(b')$. By
assumption the image of $\phi$ on elements of $K$ generates the lattice
$\mathcal{L}$, hence there exist coefficients $\alpha_b \in \Z$ for $b
\in K$ such that
$$
\phi(\pi^{-1}(b')) = \sum_{b \in K} \alpha_b \phi(b).
$$
Applying the assumption that $(\pi,\tau)$ is a symmetry of $\phi$ we get that
$$
\phi(b')=\phi(\pi(\pi^{-1}(b')))=\tau(\phi(\pi^{-1}(b'))) = \sum_{b
\in K} \alpha_b \cdot \tau(\phi(b)) = \sum_{b \in K} \alpha_b
\phi(\pi(b)).\qedhere
$$
\end{proof}

We combine Claims~\ref{cl:spanning_set_contains_b},
\ref{cl:small_spanning_set_exists} and
\ref{cl:small_spanning_set_symmetry} to derive that $\ip{\phi(b), \alpha}
\ne 0$ for many $b \in B$.  Let $\widetilde{B}=\{b \in B: \ip{\phi(b),
\alpha} \ne 0\}$.

\begin{cor}\label{cor:many_b_not_orth_alpha}
$|\widetilde{B}| \ge \Omega \left(\frac{|B|}{|A| \log(c_1 |A|)}\right)$.
\end{cor}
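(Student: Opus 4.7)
The plan is to leverage the three preceding claims through a double-counting argument that exploits the transitivity of the symmetry group. First, I would invoke Claim~\ref{cl:small_spanning_set_exists} to produce a single small set $K \subset B$ of cardinality $|K| = O(|A| \log(c_1 |A|))$ whose image under $\phi$ generates the lattice $\mathcal{L}$. Let $H \subseteq S_B$ denote the image in $S_B$ of the group of symmetries of $\phi$; by the symmetry hypothesis of Theorem~\ref{thm:main}, $H$ acts transitively on $B$.

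Next, for every $\pi \in H$ I would apply Claim~\ref{cl:small_spanning_set_symmetry} to conclude that $\pi(K)$ also generates $\mathcal{L}$, and then apply Claim~\ref{cl:spanning_set_contains_b} to extract some $b_\pi \in \pi(K)$ with $\ip{\phi(b_\pi), m\alpha} \ne 0$. Since $m \ne 0$, this is equivalent to $\ip{\phi(b_\pi), \alpha} \ne 0$, so $b_\pi \in \widetilde{B}$. Choosing one such $b_\pi$ for each $\pi$ defines a map $H \to \widetilde{B}$.

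The final step is to count the fibers of this map. For a fixed $b' \in B$, the preimage $\{\pi \in H : b_\pi = b'\}$ is contained in $\{\pi \in H : b' \in \pi(K)\} = \bigcup_{k \in K}\{\pi \in H : \pi(k) = b'\}$. By orbit-stabilizer, applied to the transitive action of $H$ on $B$, each of the $|K|$ sets in this union has cardinality exactly $|H|/|B|$, so every fiber has size at most $|K| \cdot |H|/|B|$. Summing fiber sizes over $b' \in \widetilde{B}$ gives $|H| \le |\widetilde{B}| \cdot |K| \cdot |H|/|B|$, i.e. $|\widetilde{B}| \ge |B|/|K|$, and combining with the bound on $|K|$ yields the corollary. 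I do not anticipate a serious obstacle here; the only conceptual point is noticing that transitivity turns one small spanning set into enough translates to cover a large fraction of $B$ with witnesses of non-orthogonality to $\alpha$.
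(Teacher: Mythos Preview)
Your proposal is correct and follows essentially the same route as the paper. The paper phrases the final step as an expectation computation---$\Ex_{\pi\in H}\bigl[|K_\pi\cap\widetilde{B}|\bigr]=|K|\,|\widetilde{B}|/|B|\ge 1$---while you phrase it as a fiber-size bound on a choice function $H\to\widetilde{B}$; these are the same double-count of pairs $(\pi,b)$ with $b\in\pi(K)\cap\widetilde{B}$, and both yield $|\widetilde{B}|\ge|B|/|K|$.
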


\begin{proof}
Let $K$ be the set guaranteed by Claim~\ref{cl:small_spanning_set_exists}
where $|K| \le O(|A| \log(c_1 |A|))$.  Let $K_{\pi}=\{\pi(b): b \in K\}$. We
know by Claim~\ref{cl:small_spanning_set_symmetry} that for any symmetry
$(\pi,\tau)$ of $\phi$ we have
$$
|K_{\pi} \cap \widetilde{B}| \ge 1.
$$
Let $H$ be the subgroup of permutations on $B$ given by symmetries of
$\phi$. That is, $H=\{\pi: (\pi,\tau) \textrm{ symmetry of } \phi\}$. We
know by the assumptions of Theorem~\ref{thm:main} that $H$ acts transitively
on $B$. Thus, for any fixed $b \in B$, if we choose $\pi \in H$ uniformly
we have that $\pi(b)$ is uniformly distributed in $B$. Thus,
$$
\Ex_{\pi \in H}[K_{\pi} \cap \widetilde{B}]
    = \sum_{b \in K} \Pr_{\pi \in H}[\pi(b) \in \widetilde{B}]
    = \frac{|K||\widetilde{B}|}{|B|}.
$$
We thus conclude that we must have $|\widetilde{B}| \ge |B|/|K|$.
\end{proof}

We conclude the proof of Lemma~\ref{lem:bound_fourier_near_M_far_L}
by establishing an upper bound of $\hat{X}(\alpha)$. For any $b \in
\widetilde{B}$ we have that $\ip{\phi(b),\alpha} \pmod{1} \ne 0$, hence
since $\alpha \in (1/m \cdot \Z)^A$ we have
$$
|\ip{\phi(b),\alpha} \pmod{1}| \ge 1/m.
$$
Recall that by assumption $\|\phi(b)\|_2 \le c_1$ and $\|\alpha-\theta\| \le
1/(2 c_1 m)$. Thus $|\ip{\phi(b),\alpha-\theta}| \le 1/2m$ by Cauchy-Schwarz
and we get that
$$
|\ip{\phi(b),\theta} \pmod{1}| \ge 1/2m.
$$
We thus conclude with an upper bound on $|\hat{X}(\theta)|$. Applying
Claim~\ref{cl:bound_by_exp} we have
\begin{align*}
|\hat{X}(\theta)| \le \prod_{b \in \widetilde{B}}|1-p+p e^{2 \pi i
\ip{\phi(b),\theta}}| \le \exp(-p (1/2m)^2 |\widetilde{B}|) \le \exp\left(-N
\cdot \frac{O(1)}{m^2 |A| \log(c_1 |A|)}\right).
\end{align*}

\subsection{Proof of Theorem~\ref{thm:main} from
    Lemmas~\ref{lem:estimate_fourier_near_zero},
    \ref{lem:bound_fourier_far_M} and~\ref{lem:bound_fourier_near_M_far_L}}
\label{sec:proof_of_main_thm_from_lemmas}

We now deduce Theorem~\ref{thm:main} from
Lemmas~\ref{lem:estimate_fourier_near_zero},~\ref{lem:bound_fourier_far_M}
and~\ref{lem:bound_fourier_near_M_far_L}. Recall that we have
\begin{equation}\label{eq:pr_fourier_again}
\Pr[X=\Ex[X]] = \int_{\theta \in \T^A} \hat{X}(\theta)
    e^{-2 \pi i \ip{\Ex[X],\theta}} d \theta.
\end{equation}
Let $N=\poly(|A|,m,c_0,c_1,c_2,c_3)$ large enough to be chosen
later. We would assume throughout that $N$ is a multiple of $c_0
m$.  If $|B| = O(N^2)$ then the set $B$ is small to begin with, so
assume that $|B| \gg N^2$.  We set $\eps \approx N^{-1/3}$ so that
the conditions for Lemmas~\ref{lem:estimate_fourier_near_zero}
and~\ref{lem:bound_fourier_near_M_far_L} hold. More explicitly,
we set $\eps:=O(1/ c_1 N^{1/3})$ so that the conditions for
Lemma~\ref{lem:estimate_fourier_near_zero} hold with $|\delta| \le 1/2$;
and we assume that $N \ge \Omega(m^3)$ so that $\eps \le 1/(2 c_1 m)$ and
the conditions for Lemma~\ref{lem:bound_fourier_near_M_far_L} also hold.

We decompose the integral in~\eqref{eq:pr_fourier_again} into three
integrals: over points which are $\eps$ close to $L$; over points which
are $\eps$ close to $M \setminus L$; and over points which are $\eps$
far from $M$. Our choice of $\eps < 1/2m$ also guarantees that balls of
radius $\eps$ around distinct points in $M$ are disjoint. We thus have
that $\Pr[X=\Ex[X]]=I_1+I_2+I_3$ where
\begin{align*}
I_1 &:= \sum_{\alpha \in L} \int_{\theta \in \T^A: d(\theta,\alpha) \le \eps}
    \hat{X}(\theta) e^{-2 \pi i \ip{\Ex[X],\theta}} d \theta,\\
I_2 &:= \sum_{\alpha \in M \setminus L} \int_{\theta \in \T^A: d(\theta,\alpha)
    \le \eps} \hat{X}(\theta) e^{-2 \pi i \ip{\Ex[X],\theta}} d \theta,\\
I_3 &:= \int_{\theta \in \T^A: d(\theta,M) > \eps} \hat{X}(\theta)
    e^{-2 \pi i \ip{\Ex[X],\theta}} d \theta.
\end{align*}

We first lower bound $I_1$.

\begin{claim}\label{cl:I1}
$$
I_1 \ge |L| \left(\frac{\Omega(1)}{c_1 N^{1/2} |A|^{1/2}} \right)^{|A|}.
$$
\end{claim}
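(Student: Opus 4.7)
The plan is to localize each of the $|L|$ integrals defining $I_1$ to the origin, evaluate it using the Gaussian approximation of Lemma~\ref{lem:estimate_fourier_near_zero}, and then lower-bound the resulting common Gaussian integral by restricting to a ball on which the exponent is bounded. First I would substitute $\theta = \alpha + \theta'$ in each summand, turning the domain into $\{\|\theta'\|_2 \le \eps\}$. By Claim~\ref{cl:properties_L}, $\hat{X}(\alpha+\theta') = \hat{X}(\theta')$ for every $\alpha \in L$. The phase factor $e^{-2\pi i \ip{\Ex[X],\alpha}}$ equals $1$ uniformly in $\alpha$: the divisibility assumption $c_0 m \mid N$ forces $\Ex[X] = (N/c_0)\cdot(c_0/|B|)\phi(B) \in m\Z^A$, while Claim~\ref{cl:L_in_M} puts $\alpha \in L \subseteq M = ((1/m)\Z)^A$, so $\ip{\Ex[X],\alpha} \in \Z$. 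Pulling the factor $|L|$ outside the sum gives
\[
I_1 = |L|\cdot \int_{\|\theta'\|_2 \le \eps} \hat{X}(\theta')\, e^{-2\pi i \ip{\Ex[X],\theta'}}\, d\theta'.
\]

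Next, apply Lemma~\ref{lem:estimate_fourier_near_zero} inside this integral. With $\eps = O(1/(c_1 N^{1/3}))$ chosen so that $|\delta|\le 1/2$, the explicit phase cancels and the integrand becomes $e^{-4\pi^2 p\, \theta'^{T} R \theta'}(1+\delta(\theta'))$. Since $\Re(1+\delta)\ge 1/2$, taking real parts yields
\[
\Re(I_1) \;\ge\; \frac{|L|}{2}\int_{\|\theta'\|_2 \le \eps} e^{-4\pi^2 p\, \theta'^{T} R \theta'}\, d\theta'.
\]

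Finally, I would lower-bound this Gaussian integral by shrinking the domain to the smaller ball $\|\theta'\|_2 \le \eta := 1/(c_1\sqrt{N})$, which is inside $\{\|\theta'\|_2\le \eps\}$. Since $R = \sum_{b\in B}\phi(b)\phi(b)^{T}$ is PSD with trace $\sum_{a} R_{a,a} = \sum_{b}\|\phi(b)\|_2^2 \le |B|c_1^2$, one has $\lambda_{\max}(R) \le |B|c_1^2$, so on this smaller ball the exponent is at most $4\pi^2\cdot(N/|B|)\cdot|B|c_1^2\cdot 1/(c_1^2 N) = 4\pi^2$, and the integrand stays $\ge e^{-4\pi^2}$. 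The volume of the ball, combined with Stirling's estimate $\Gamma(|A|/2+1) = O(|A|/(2e))^{|A|/2}\sqrt{|A|}$, is at least $(\Omega(1)/(c_1\sqrt{N|A|}))^{|A|}$, producing exactly the advertised lower bound. The argument is mechanical; the only delicate point is the joint divisibility book-keeping that makes $e^{-2\pi i \ip{\Ex[X],\alpha}}=1$ uniformly on $L$, which is where the constraint $c_0 m \mid N$ is essential.
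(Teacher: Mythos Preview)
Your proposal is correct and follows essentially the same route as the paper: reduce all $|L|$ summands to a single integral near $0$ via Claim~\ref{cl:properties_L} and the divisibility $c_0 m\mid N$, apply Lemma~\ref{lem:estimate_fourier_near_zero} to cancel the phase and obtain the Gaussian factor with a multiplicative error $(1+\delta)$, take real parts using $\Re(1+\delta)\ge 1/2$, and then lower-bound the Gaussian integral by restricting to a ball of radius $\Theta(1/(c_1\sqrt{N}))$ on which the exponent is $O(1)$. The only cosmetic difference is that you bound $\theta'^{T}R\theta'$ via $\lambda_{\max}(R)\le\mathrm{tr}(R)\le |B|c_1^2$, whereas the paper uses $\theta^T R\theta=\sum_{b}\ip{\theta,\phi(b)}^2\le |B|c_1^2\|\theta\|_2^2$ directly; these are the same estimate.
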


\begin{proof}
We first use the assumption that $N$ divides $c_0 m$ to reduce computing
$I_1$ to an integral around $0$. We claim that the assumption that $c_0 m |
N$ implies that $\ip{\Ex[X],\alpha} \in \Z$ for all $\alpha \in L$. This
is since this choice implies that all entries of $\Ex[X]$ are divisible
by $m$ since
$$
\Ex[X] = \frac{N}{|B|} \phi(B) = (N/c_0 m) \cdot m \cdot \frac{c_0}{|B|}
    \phi(B) \in m \Z^A.
$$
Moreover, since $\alpha \in L \subset M$ we have that $m \alpha
\in \Z^A$, hence $\ip{\Ex[X],\alpha} \in \Z$. Combining this with
Claim~\ref{cl:properties_L} which states that the Fourier coefficients
of $X$ are invariants to shifts by $\alpha \in L$, we deduce that
$$
I_1 = |L| \int_{\theta \in \T^A: \|\theta\|_2 \le \eps}
    \hat{X}(\theta) e^{-2 \pi i \ip{\Ex[X],\theta}} d \theta.
$$
Recall that by Lemma~\ref{lem:estimate_fourier_near_zero} and our choice
of parameters, if $\|\theta\|_2 \le \eps$ then
$$
\hat{X}(\theta) = \widetilde{X}(\theta)(1+\delta(\theta))
$$
where $\widetilde{X}(\theta)=e^{2 \pi i \ip{\Ex[X],\theta}} e^{-4 \pi^2
p \cdot \theta^T R \theta}$ and where $|\delta(\theta)| \le 1/2$. Hence
$$
I_1 = |L| \int_{\theta \in \T^A: \|\theta\|_2 \le \eps}
    e^{-4 \pi^2 p \cdot \theta^T R \theta} (1+\delta) d \theta.
$$
Consider
\begin{align*}
I'_1 = |L| \int_{\theta \in \T^A: \|\theta\|_2 \le \eps}
    e^{-4 \pi^2 p \cdot \theta^T R \theta} d \theta.
\end{align*}
We claim that $|I_1| \ge |I'_1|/2$, hence it suffices to lower bound
$|I'_1|$ in order to lower bound $|I_1|$. To see that, note that $I'_1$
is an integral of a real positive function; that we can always lower bound
$|I_1|$ by its real part $\Re(I_1)$; and that $\Re(1+\delta) \ge 1/2$
since $|\delta| \le 1/2$. Thus
$$
|I_1| \ge \Re(I_1) \ge \Re(I'_1)/2 = I'_1/2.
$$
We next lower bound $I'_1$. Note first that we can bound $\theta^T R \theta
\le B c_1^2 \|\theta\|_2^2$. This is because
$$
\theta^T R \theta = \sum_{b \in B} \ip{\theta,\phi(b)}^2
    \le \sum_{b \in B} \|\theta\|_2^2 \|\phi(b)\|_2^2
    \le B c_1^2 \|\theta\|_2^2.
$$
Thus we get that
$$
I'_1 \ge |L| \int_{\theta \in \T^A: \|\theta\|_2 \le \eps} e^{-4 \pi^2
c_1^2 N  \|\theta\|_2^2} d \theta.
$$
We bound $I'_1$ from below by the volume of the region in which the
integrand is constant. This occurs whenever $\|\theta\|_2 \le \eps'=O(1/(c_1
N^{1/2}))$. Recall that we chose $\eps = O(1/(c_1 N^{1/3})) \gg \eps'$. Hence
the ball of radius $\eps'$ is contained in the area over which we integrate,
so we obtain the lower bound
$$
|I_1| \ge I'_1/2 \ge |L| \cdot O(1) \cdot
\mathrm{Vol}(\mathrm{Ball}(0,\eps')) = L \cdot
\left(\frac{\Omega(1)}{\eps' |A|^{1/2}} \right)^{|A|}= L \cdot
\left(\frac{\Omega(1)}{c_1 N^{1/2} |A|^{1/2}} \right)^{|A|}.\qedhere
$$
\end{proof}

The next steps are to bound $I_2$ and $I_3$ from above. We bound them by the
maximal value that $|\hat{X}(\theta)|$ can achieve in their integral
domains. Lemma~\ref{lem:bound_fourier_near_M_far_L} gives a bound on $I_2$,
$$
|I_2| \le \max\{|\hat{X}(\theta)|: d(\theta,M \setminus L) \le \eps\}
    \le \exp\left(-N \cdot \frac{O(1)}{m^2 |A| \log(c_1 |A|)}\right),
$$
and Lemma~\ref{lem:bound_fourier_far_M} and our choice of $\eps =
O(1/(c_1 N^{1/3}))$ gives a bound on $I_3$,
$$
|I_3| \le \max\{|\hat{X}(\theta)|: d(\theta,M) \ge \eps\} \le
    \exp\left(-N \eps^2 \cdot \frac{m^2}{|A| c_2 c_3^2}\right)=
    \exp\left(-O(N^{1/3}) \cdot \frac{m^2}{|A| c_1^2 c_2 c_3^2}\right).
$$

We now need to choose $N$ large enough so that $I_1 \gg |I_2|,|I_3|$. This
can be accomplished since $I_1$ decays polynomially with $N$, while
$|I_2|,|I_3|$ decay exponentially fast. It is not hard to verify that this
is guaranteed whenever
$$
N \ge \Omega(1) \cdot \max(A^2 m^2 \log^2(m A c_0 c_1 c_2 c_3),
    A^6 c_1^6 c_2^3 c_3^6 \log^3(m A c_0 c_1 c_2 c_3)).
$$

\section{Summary and open problems}
\label{sec:summary}

Our main theorem guarantees the existence of a small subset $T \subset
B$ for which \eqref{eq:prob_def} holds. The conditions we require are
boundedness, divisibility, symmetry and isolation. The first three
conditions seem natural for this type of problems, but the fourth seems
artificial, as it depends on the specific basis we choose for $V$. Thus,
we wonder if this condition can be removed. In particular, the following
question captures much of the difficulty.  Let $G$ be a group that acts
transitively on a set $X$. A subset $T \subset G$ is \emph{$X$-uniform}
(or an \emph{$X$-design}) if it acts on $X$ exactly as $G$ does. That is,
for any $x,y \in X$,
$$
\frac{1}{|T|} |\{g \in T: g(x)=y\}| = \frac{1}{|G|} |\{g \in G:
g(x)=y\}| = \frac{1}{|X|}.
$$
In our language we may take $B=G$ and $V$ to be the space
spanned by all functions $\phi_{(x,y)}:B\to\{0,1\}$ of the form
$\phi_{(x,y)}(b)=\mathbf{1}_{\{b(x)=y\}}$ for $x,y\in X$. Then $T$ is
$X$-uniform if and only if \eqref{eq:prob_def} holds. Taking $A$ to be
some subset of $X\times X$ for which $(\phi_a)_{a\in A}$ forms a basis of
$V$, the boundedness, divisibility and symmetry conditions are clearly
satisfied. However, it is not clear whether the isolation condition is
satisfied as well.  If indeed the isolation condition is redundant, one
may conjecture that:

\begin{conj} Let $G$ be a group that acts transitively on a set $X$. Then
there exists an $X$-uniform subset $T \subset G$ such that $|T| \le |X|^c$
for some universal constant $c>0$.
\end{conj}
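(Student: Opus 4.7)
As the excerpt already notes, the natural plan is to apply Theorem~\ref{thm:main} with $B=G$ and $V$ spanned by the functions $\phi_{(x,y)}(g)=\one{g(x)=y}$, and the first three hypotheses are immediate. Transitivity of $G$ on $X$ gives $\frac{1}{|G|}\phi(B)_{(x,y)}=1/|X|$, so divisibility holds with $c_0=|X|$; the $\{0,1\}$-valuedness of $\phi$ gives boundedness with $c_1 \le |X|$; and left multiplication $g\mapsto hg$ is a symmetry, since the identity $(hg)(x)=h(g(x))$ yields $\phi_{(x,y)}(hg)=\phi_{(x,h^{-1}y)}(g)$, so the corresponding $\tau \in \GL(V)$ is just the permutation matrix sending coordinate $(x,h^{-1}y)$ to $(x,y)$, and left multiplication acts transitively on $G$. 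The entire problem thus reduces to producing, for each basis index $a=(x_0,y_0)\in A$, at least $|G|/\poly(|X|)$ disjointly supported vectors $\gamma \in \Z^G$ of $\ell_2$-norm $\poly(|X|)$ with $\phi(\gamma)=m\cdot e_a$ for some $m=\poly(|X|)$.

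My first attempt would be representation-theoretic. By Peter--Weyl, $\Q[G]=\bigoplus_\rho W_\rho$ decomposes into matrix-coefficient subspaces indexed by the irreducibles of $G$, and any coordinate functional on $V$ can be expressed as a rational combination $\sum_g c_g \phi(g)$ using the central idempotents $e_\rho=\frac{\dim \rho}{|G|}\sum_g \chi_\rho(g^{-1})g$. This yields a single isolating vector with denominator $m$ bounded by $|G|$, but its support is all of $G$, so one cannot immediately extract many disjoint copies. The natural refinement would choose a suitable small subgroup $H \le G$, build an isolating combination supported on a single coset $g_0H$ by pairing characters of $H$ with the restriction of $V$ to that coset, and then use the left-translation symmetry to obtain $|G|/|H|$ disjoint shifts of the construction; for this to fit within Theorem~\ref{thm:main}'s constraints one would need $|H|=\poly(|X|)$ and $\ell_2$-norms of the character-based coefficients to be polynomial as well.

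The principal obstacle is that in each of the paper's three successful applications isolation exploits highly specialized combinatorial structure---M\"obius inversion on a subset lattice for orthogonal arrays and designs, Young-tableau combinatorics for $t$-wise permutations---and no general mechanism is known that produces polynomially many, small-norm, small-denominator isolating combinations for an arbitrary transitive action $G$ on $X$. An appealing alternative is to bypass isolation entirely: in the proof of Theorem~\ref{thm:main} it is used only to force $L\subseteq M$ (Claim~\ref{cl:L_in_M}) and to control $\hat X(\theta)$ outside a neighborhood of $L$ (Lemma~\ref{lem:bound_fourier_far_M}), and for $B=G$ one might hope to substitute direct character-sum estimates on $\hat X(\theta)=\prod_{b\in G}(1-p+pe^{2\pi i \ip{\phi(b),\theta}})$ derived from the decomposition of $\Q[G]$ into $G$-irreducibles, in the spirit of mixing and expansion bounds on Cayley graphs. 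Engineering a clean group-theoretic replacement for isolation---one that also captures why an adversarial transitive action might fail, if the conjecture is false---looks to be the crux of the problem.
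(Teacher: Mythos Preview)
The statement is a \emph{conjecture}, not a theorem: the paper presents it in Section~\ref{sec:summary} as an open problem and offers no proof. There is therefore no ``paper's own proof'' to compare against.

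Your proposal correctly recognizes this. What you have written is not a proof but a research outline, and it faithfully tracks the paper's own assessment: with $B=G$ and $V$ spanned by the indicators $\phi_{(x,y)}$, the divisibility, boundedness, and symmetry hypotheses of Theorem~\ref{thm:main} hold for free, and the sole obstruction is the isolation condition. (A minor tightening: since for each fixed $g$ and each $x$ there is exactly one $y$ with $g(x)=y$, one has $\|\phi(g)\|_2^2\le |X|$, so $c_1\le |X|^{1/2}$ rather than $|X|$; this changes nothing substantive.) Your two suggested attacks---building isolating combinations supported on cosets of a small subgroup via characters, or bypassing isolation altogether by bounding $\hat X(\theta)$ directly through the representation theory of $G$---are plausible directions, but you correctly stop short of claiming either succeeds. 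The paper likewise leaves the question open, noting that isolation ``seems artificial'' and wondering whether it can be removed. So your analysis is sound as a discussion of the problem; it is simply not, and does not purport to be, a proof.
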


A second question is whether one can apply our techniques to get
\emph{minimal} objects. Recall that the size of the objects we achieve
is only minimal up to polynomial factors. For example, one of the main
open problems in design theory is whether there exists a Steiner system
(i.e. a $t$-design with $\lambda=1$) for any $t > 5$.  Another major
open problem of a similar spirit is the existence of Hadamard matrices
of all orders $n=4m$, or equivalently, $2$-$(4m-1, 2m-1, m-1)$ designs.
Empirical estimates for $n \le 32$ suggest that there are $\exp(O(n(\log
n)))$ Hadamard matrices of order $n = 4m$.  Since are so many of them,
and since the logarithm of their number grows at a regular rate, we suspect
that they exist for some purely statistical reason.  However, the Gaussian
local limit model seems to be false for Hadamard matrices interpreted as
$t$-designs; it does not accurately estimate how many there are.

A third question is whether there exists an algorithmic version
of our work, similar to the algorithmic Moser~\cite{Moser2009} and
Moser-Tardos~\cite{MoserTardos2010} versions of the Lov\'{a}sz local
lemma~\cite{ErdosLovasz73}, and the algorithmic Bansal~\cite{Bansal2010}
version of the six standard deviations method of Spencer~\cite{Spencer1985}.
If an efficient randomized algorithm of our method were found, then we
could no longer indisputably claim that we have a low-probability version
of the probabilistic method.  On the other hand it would be strange,
from the viewpoint of computational complexity theory, if low-probability
existence can always be converted to high-probability existence.  Maybe our
construction is fundamentally a low-probability construction.

\bibliographystyle{hamsalpha}
\bibliography{designs}

\end{document}